\documentclass{amsart}
\usepackage{amsmath,amssymb,graphicx}
\usepackage{mathrsfs}

\newtheorem{theorem}{Theorem}[section]
\newtheorem{lemma}[theorem]{Lemma}

\newtheorem{remark}[theorem]{Remark}
\newtheorem{definition}[theorem]{Definition}

\newcommand{\dotcup}{\dot{\cup}}
\newcommand{\cupn}[1]{\stackrel{#1}{\cup}}
\newcommand{\A}{\mathcal{A}}

\newcommand{\Lk}{\mathscr{L}_k}
\newcommand{\LO}{L^o}
\newcommand{\R}{\mathbb{R}}

\newcommand{\QQ}{\mathcal{Q}}
\newcommand{\T}{\mathcal{T}}

\begin{document}
	
\title{Ihara zeta functions for some simple graph families.}
\author[M.\ Chico]{Maize Chico}
\author[T.W.\ Mattman]{Thomas W.\ Mattman}
\email{TMattman@CSUChico.edu}
\author[D.A.\ Richards]{Alex Richards}
\email{(DAR) minerman60101@gmail.com}
\address{Department of Mathematics and Statistics,
California State University, Chico,
Chico, CA 95929-0525}
%\date{\today}

\begin{abstract}
   The reciprocal of the Ihara zeta function of a graph is a polynomial invariant introduced by Ihara in 1966.
   Scott and Storm gave a method to determine the coefficients of the polynomial. Here we simplify
   their calculation and determine the zeta function for all graphs of rank two. We verify that it is a complete
   invariant for such graphs: If $G_1$ and $G_2$ are of rank two, then $G_1$ and $G_2$ are isomorphic if and only if they have the
   same Ihara zeta function. We observe that the reciprocal of the zeta function is an even polynomial
   if the graph is bipartite.
   
    We also determine the zeta function for several graph families:  complete graphs, 
    complete bipartite graphs, M\"{o}bius ladders, cocktail party graphs, and all graphs of order five or less.
    We use the special value $u=1$ to count the spanning trees for these families.
\end{abstract}
	
\thanks{
The third author was supported in part by the CUReCAP program of the Office of Undergraduate Education
at CSU, Chico.
Maize Chico is Andrea Barnett, Alyxis Beardwood, James Cureton, Asher Curtis,
Natalie Dinin,
Emmanuel Disla,
%emmanueld90@gmail.com
Zaryab Fayyaz, Nicole Keane,
Ethan Marshburn, Drew Rodriguez,
and Logan Underwood.
}

\maketitle

\section{Introduction}

The Ihara zeta function was introduced by Yasutaka Ihara~\cite{I1,I2} in the 1960s. 
As discussed by Terras~\cite[Theorem 2.5]{T}, there is 
a determinant formula (due to Ihara, Bass, and Hashimoto among others) 
for the Ihara zeta function of a graph $G$, which is always the reciprocal of a polynomial.

\begin{definition} 
\label{def:3D}
Let $G$ be a connected graph with no vertices of degree 1.
Let $\A$ be the {\em adjacency matrix} of $G$, and $\QQ = D-I$ where $D$ is the {\em degree matrix}. 
That is,
$\QQ$ is diagonal with $j$th diagonal entry $q_j$ such that $q_j+1$ is the degree of the $j$th vertex of $G$. Let $r = |E|-|V|+1$ be the
{\em rank} of the graph.

Then,
 $$\zeta_G(u)^{-1} = \left(1-u^2\right)^{r-1} \det\left(I - \A u + \QQ u^2\right).$$
\end{definition}

 In \cite{SS}, Scott and Storm make use of an expression, due to Kotani and Sunada~\cite{KS}, for the
 zeta function using the oriented line graph, $\LO G$, of $G$. 
 
 \begin{definition}
 \label{def:T}
 Let $G$ be a connected graph with no vertices of degree 1
 and $\T$ be the adjacency matrix of $\LO G$. Then,
 $$\zeta_G(u)^{-1}=\det(I-u \T ).$$
 \end{definition}

We will sometimes refer to $\zeta_{G}(u)^{-1}$ as {\em the zeta polynomial} of graph $G$.
For us, {\em graphs} are undirected and, as in Definition~\ref{def:3D}, we are primarily interested
in connected graphs with no degree 1 vertices. Graphs may have loops or multiple edges
and, when they do, we call them {\em multigraphs}. In contrast, {\em simple graphs}
have no loops or double edges. We reserve the term {\em digraph} for directed graphs.

Our main theorem is a simplification of the method for calculating coefficients of the
Ihara zeta polynomial described by Scott and Storm~\cite{SS}. First we need two definitions.
\begin{definition}
    For a directed graph $D$, a {\em linear subgraph} is a disjoint collection of directed cycles.
\end{definition}

\begin{definition}
    For a directed graph $D$, let $\mathscr{L}_k(D)$ be the set of linear subgraphs of $D$ that have $k$ vertices.
For $L \in \mathscr{L}_k(D)$, $r(L)$ is the number of cycles in $L$.
\end{definition}

\begin{theorem}
\label{thm:Main}
    For a graph $G$, the reciprocal of the Ihara zeta function, $\zeta_G(u)^{-1}$, is a polynomial with terms $c_k u^k$ for 
    $0 \leq k \leq 2 |E|$.
    The constant term is $c_0 = 1$, and $c_k = 0$ when $\mathscr{L}_k(\LO G) = \emptyset$. Otherwise,
    \[
    c_k = \sum_{L \in \mathscr{L}_k(\LO G)} (-1)^{r(L)}.
    \]
\end{theorem}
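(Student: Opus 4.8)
The plan is to start from Definition~\ref{def:T}, which expresses $\zeta_G(u)^{-1}$ as $\det(I - u\T)$ with $\T$ the adjacency matrix of the oriented line graph $\LO G$, and to expand this determinant combinatorially. Since $\LO G$ has exactly $2|E|$ vertices (two arcs per edge of $G$), $\T$ is a $2|E| \times 2|E|$ matrix with entries in $\{0,1\}$ (between any ordered pair of arcs there is at most one directed edge of $\LO G$), so $\det(I - u\T)$ is a polynomial of degree at most $2|E|$. The first step is to invoke the standard identity
\[
\det(I - u\T) \;=\; \sum_{k=0}^{2|E|} (-u)^k \, e_k(\T), \qquad e_k(\T) = \sum_{\substack{S \subseteq V(\LO G) \\ |S| = k}} \det \T[S],
\]
where $\T[S]$ denotes the principal submatrix of $\T$ on the rows and columns indexed by $S$; in particular $e_0(\T) = \det(I) = 1$, which gives $c_0 = 1$.

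Next I would expand each principal minor by the Leibniz formula, $\det \T[S] = \sum_{\sigma \in \mathrm{Sym}(S)} \mathrm{sgn}(\sigma) \prod_{i \in S} \T_{i,\sigma(i)}$. A summand is nonzero precisely when $(i, \sigma(i))$ is a directed edge of $\LO G$ for every $i \in S$, and in that case the product equals $1$. The key combinatorial observation is that such pairs $(S,\sigma)$ are in bijection with the linear subgraphs of $\LO G$ whose vertex set is $S$: writing $\sigma$ as a product of disjoint cycles, each cycle $(i_1\, i_2\, \cdots\, i_\ell)$ traces out a directed cycle $i_1 \to i_2 \to \cdots \to i_\ell \to i_1$ of $\LO G$, these cycles are vertex-disjoint and exhaust $S$, and conversely a disjoint union of directed cycles covering $S$ determines $\sigma$. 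A fixed point of $\sigma$ corresponds to a loop of $\LO G$, i.e.\ a directed $1$-cycle; such loops arise only when $G$ itself has a loop, and they are legitimately directed cycles in a linear subgraph, so the bijection needs no special case. Under this correspondence the number of cycles of $\sigma$ equals $r(L)$, and $|S| = k$ exactly when $L \in \mathscr{L}_k(\LO G)$.

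Finally I would assemble the signs. A permutation of a $k$-element set that is a product of $r(L)$ disjoint cycles satisfies $\mathrm{sgn}(\sigma) = (-1)^{k - r(L)}$, so the contribution of a linear subgraph $L \in \mathscr{L}_k(\LO G)$ to the coefficient of $u^k$ in $\det(I - u\T)$ is $(-1)^k \cdot (-1)^{k - r(L)} = (-1)^{r(L)}$. Summing over all such $L$ yields $c_k = \sum_{L \in \mathscr{L}_k(\LO G)} (-1)^{r(L)}$, which is the empty sum $0$ when $\mathscr{L}_k(\LO G) = \emptyset$, as claimed. This argument is essentially the classical ``coefficients theorem'' for the characteristic polynomial of a digraph applied to $\LO G$; the determinant expansions are routine, so the only real care needed is in the sign bookkeeping of the last step and in confirming that the $\{0,1\}$-entries and possible loops of $\T$ cause no trouble.
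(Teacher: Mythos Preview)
Your proof is correct and follows the same overall strategy as the paper's: both expand $\det(I-u\T)$ as an alternating sum of principal minors and identify the nonzero Leibniz terms of each minor with linear subgraphs of $\LO G$ on the chosen vertex set. The only real difference is in the final sign bookkeeping. The paper, following Scott--Storm, first records the sign of the permutation as $(-1)^{e(L)}$ (with $e(L)$ the number of even cycles in $L$), arrives at $c_k=\sum_L(-1)^{k+e(L)}$, and then runs a short case-by-case parity analysis (their Table~\ref{tbl:par}) to show $k+e(L)\equiv r(L)\pmod 2$. You instead use the equivalent standard identity $\mathrm{sgn}(\sigma)=(-1)^{k-r(L)}$ from the outset, which collapses the sign to $(-1)^{r(L)}$ in one line and makes the parity table unnecessary; this is a modest streamlining of exactly the step the paper highlights as its simplification over~\cite{SS}.
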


A natural consequence of Scott and Storm's approach is a new proof that the Ihara zeta polynomial of a bipartite graph is even.
In fact, the converse also holds, see~\cite[Lemma 6]{Co}.

\begin{theorem}
\label{thm:even}
    If $G$ is bipartite, then the reciprocal of the Ihara zeta function, $\zeta_G(u)^{-1}$, is an even polynomial. That is $c_k = 0$ when
    $k$ is odd.
\end{theorem}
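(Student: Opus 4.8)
The plan is to use the characterization of the coefficients $c_k$ given by Theorem~\ref{thm:Main} and show that every linear subgraph of $\LO G$ on an odd number of vertices must be empty when $G$ is bipartite. Concretely, I would fix a bipartition $V(G) = V_1 \sqcup V_2$ and analyze the combinatorial structure of $\LO G$. Recall that the vertices of the oriented line graph are the oriented edges (arcs) of $G$, and there is an arc from $(u,v)$ to $(v,w)$ precisely when $w \neq u$. The key structural observation is that any arc of $\LO G$ goes from an arc of $G$ ending at a $V_1$-vertex to an arc of $G$ ending at a $V_2$-vertex, or vice versa; equivalently, partition the vertex set of $\LO G$ according to the parity (in the bipartition) of the head of each oriented edge of $G$. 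This gives a two-coloring of $V(\LO G)$ in which every arc of $\LO G$ joins the two color classes.

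The second step is to observe that any directed cycle in a digraph whose vertices are properly two-colored so that every arc changes color must have even length: following the cycle, the colors alternate, so returning to the start forces an even number of steps. Since a linear subgraph $L \in \mathscr{L}_k(\LO G)$ is a disjoint union of directed cycles, and each such cycle has an even number of vertices, the total vertex count $k$ is even. Hence $\mathscr{L}_k(\LO G) = \emptyset$ for all odd $k$, and Theorem~\ref{thm:Main} immediately gives $c_k = 0$ for odd $k$.

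The main point requiring care is verifying the two-coloring claim for $\LO G$, i.e., checking that the ``parity of the head'' assignment really does flip along every arc of the oriented line graph. An arc of $\LO G$ from $(u,v)$ to $(v,w)$ has the property that the head $v$ of the first edge is the tail of the second, and the head of the second edge is $w$; since $G$ is bipartite and $vw \in E(G)$, the vertices $v$ and $w$ lie in opposite parts, so the ``head parity'' of $(u,v)$ and $(v,w)$ differ. This is the only nontrivial verification; once it is in place, the parity argument for directed cycles is routine. I would also note in passing that this argument does not use connectedness or the no-degree-one hypothesis beyond what is needed for Definition~\ref{def:T} to apply, and that it recovers the fact that $\zeta_G(u)^{-1}$ is even.
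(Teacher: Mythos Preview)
Your proof is correct and follows the same overall strategy as the paper: invoke Theorem~\ref{thm:Main} and show that every directed cycle in $\LO G$ has even length, so that $\mathscr{L}_k(\LO G)=\emptyset$ for odd $k$.

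The difference is in how that key step is established. The paper maps a directed cycle $\vec{C}$ in $\LO G$ back to a closed walk $C$ in $G$, then decomposes $C$ as a sum of cycles and doubly-traversed paths and uses bipartiteness of $G$ to conclude that each piece, hence $C$ itself, has even length. Your argument is more direct: you two-color the vertices of $\LO G$ by the bipartition class of the head of each arc, check that every arc of $\LO G$ switches colors, and conclude immediately that directed cycles have even length. This is really the same underlying observation---tracing $\vec{C}$ amounts to walking in $G$ and alternating parts---but your formulation packages it as ``$\LO G$ is itself bipartite,'' which bypasses the closed-walk decomposition the paper uses and is a bit cleaner.
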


We prove these theorems in the next section.
Using Theorem~\ref{thm:Main}, we determine the zeta function for every graph of rank two in Section~\ref{sec:rk2}.
As in Definition~\ref{def:3D}, the rank of a graph is $|E|-|V| + 1$ where $|V|$ is the graph's {\em order}, or number of vertices,
and $|E|$ is its {\em size}, or number of edges.

In Section~\ref{sec:IZr2}, we show that the Ihara zeta function is a complete invariant for the rank two graphs:
\begin{theorem}
\label{thm:IZr2}
   Let $G_1$ and $G_2$ be graphs of rank two.
   Then, $G_1$ and $G_2$ are isomorphic if and only if $\zeta_{G_1}(u) = \zeta_{G_2}(u)$.
\end{theorem}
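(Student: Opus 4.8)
The plan is to combine the classification of rank two graphs obtained in Section~\ref{sec:rk2} with the explicit zeta polynomials computed there. Recall that a connected graph with no vertex of degree one and rank two is a subdivision of one of exactly three ``core'' graphs: the graph on two vertices with three parallel edges; the graph with a loop at each of two vertices, joined by an edge; and the one-vertex graph with two loops. Hence $G$ belongs to one of three families: a generalized theta graph $\Theta(a,b,c)$, two vertices joined by three internally disjoint paths of lengths $a\le b\le c$; a handcuff graph $H(p,q,\ell)$, two disjoint cycles of lengths $p\le q$ joined by a path of length $\ell\ge1$; or a figure-eight graph $F(p,q)$, two cycles of lengths $p\le q$ meeting in a single vertex. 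Within each family the isomorphism type is pinned down exactly by the displayed data --- the multiset $\{a,b,c\}$, the pair $\{p,q\}$ together with $\ell$, or the pair $\{p,q\}$ --- as one checks directly. So the theorem reduces to showing that $\zeta_G(u)^{-1}$ determines which family contains $G$ and then its parameters.

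To that end I would read off from $P(u):=\zeta_G(u)^{-1}$ the following invariants. Its degree is $2|E|$ by Theorem~\ref{thm:Main}. The smallest $k>0$ with $c_k\ne0$ is, by Theorem~\ref{thm:Main}, the length of a shortest non-backtracking tailless closed walk in $G$; this equals $a+b$ for $\Theta(a,b,c)$ and $\min(p,q)=p$ for $H(p,q,\ell)$ and $F(p,q)$. Whether $P$ is even tells us, by Theorem~\ref{thm:even} and \cite[Lemma~6]{Co}, whether $G$ is bipartite. Finally the number $\kappa(G)$ of spanning trees is recovered from the behaviour of $P$ at $u=1$, as discussed later in the paper; here $\kappa=ab+bc+ca$ for a theta graph and $\kappa=pq$ for both a handcuff and a figure-eight. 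Already $|E|$, $k$ and $\kappa$, together with the closed forms for $P$ from Section~\ref{sec:rk2}, separate the three families: for example, if a figure-eight and a theta graph had the same $P$ then $p=a+b$, $q=|E|-p=c$ and $pq=ab+bc+ca$, forcing $ab=0$, which is impossible; the other two pairs are excluded by similar short computations. Within the theta family, $a+b$, $a+b+c=|E|$ and $ab+bc+ca=\kappa$ determine $\{a,b,c\}$ as the solution of a quadratic. A figure-eight is read off from $p=k$ and $q=|E|-p$. For a handcuff one has $p=k$ and $p+q+\ell=|E|$, and extracts the last degree of freedom from the Section~\ref{sec:rk2} formula --- concretely from the presence of the exponent $p+q+2\ell$, the length of a shortest non-backtracking cycle that crosses the connecting path, which separates $q$ from $\ell$.

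The crux, and the main obstacle, is controlling cancellation. Each $c_k$ is a signed count of linear subgraphs (Theorem~\ref{thm:Main}), so an exponent one expects --- a pairwise sum $a+c$ for a theta graph, or the length $p+q+2\ell$ for a handcuff --- can vanish when it collides with another cycle length, and one must rule out that two non-isomorphic parameter choices give the same polynomial through such collisions. I would handle this by arguing not from Theorem~\ref{thm:Main} alone but from the explicit expressions for $\zeta_G(u)^{-1}$ derived in Section~\ref{sec:rk2}, in which the dependence on the parameters is transparent. With those in hand the reconstructions above become unambiguous, the cross-family coincidences are visibly excluded, and the finitely many small or loop-bearing degenerate cases --- such as $\Theta(1,1,1)$ versus $H(1,1,1)$, which already have different spanning-tree counts --- are checked by hand. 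This reduces Theorem~\ref{thm:IZr2} to a finite verification.
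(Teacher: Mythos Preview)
Your plan is sound and leads to a correct proof, but it takes a somewhat different route than the paper's. The paper first separates the figure-eight graphs from the theta and handcuff graphs by the Kotani--Sunada leading coefficient of $\zeta_G(u)^{-1}$, which is $-3$ for a figure-eight and $-4$ otherwise; it then uses girth and $|E|$ within the figure-eight family, girth and $\kappa$ within the handcuff family, and for theta graphs it reads off the two smallest cycle lengths $m$ and $n$ directly from the low-order terms of the explicit polynomial of Theorem~\ref{thm:Gmnp}. You, by contrast, never invoke the leading coefficient and instead separate all three families by a short computation with $(|E|,\text{girth},\kappa)$ alone. That is a genuine simplification over the paper's case analysis.

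In fact your own invariants already finish the handcuff case without appealing to the exponent $p+q+2\ell$: since $\kappa=pq$ and the girth gives $p$, you get $q=\kappa/p$ and then $\ell=|E|-p-q$. More generally, with $g$ the girth, your three computations show that $\kappa-g(|E|-g)$ equals $ab>0$ for a theta graph, $0$ for a figure-eight, and $-p\ell<0$ for a handcuff; so the sign of this single quantity determines the family, after which the parameters are forced. This makes your worries about cancellation in specific coefficients unnecessary: you never need any term of $\zeta_G(u)^{-1}$ beyond the degree, the girth term (whose nonvanishing is the Scott--Storm result the paper already cites), and the value of the $r$th derivative at $u=1$. The paper's approach, by comparison, trades this uniform argument for slightly more elementary bookkeeping but has to inspect individual monomials in the theta case.
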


\begin{remark}
There are many examples of non-isomorphic graphs that have the same Ihara zeta function, the simplest
being a pair of graphs on eight vertices~\cite{SS2}, both of rank seven.
\end{remark}

Next, after introducing some lemmas for matrix determinants in Section~\ref{sec:Mdet}, in Section~\ref{sec:var}
we determine the zeta function for five families of symmetric or regular graphs: complete graphs, complete bipartite graphs,
cocktail party graphs, M\"obius ladders, and a family of bipartite graphs we call $B_n$. 
For $n$ even, $B_n$ is $K_{n/2,n/2}$ less the edges of a perfect matching.

In Section~\ref{sec:ord}, we give the zeta function for every multigraph of order three or less and every
simple graph with at most five vertices. 
In Section~\ref{sec:kappa}, we conclude the paper by 
using the special value $u=1$ of the Ihara zeta function to calculate the number of spanning trees for the graphs of
rank two and four of the five families of Section~\ref{sec:var}.

\section{Proof of Theorems~\ref{thm:Main} and \ref{thm:even}}

In this section we prove two theorems, beginning with
Theorem~\ref{thm:Main}, which is a simplification of the technique described by Scott and Storm~\cite{SS}.
We will assume some familiarity with that paper, but let's restate the main definitions. Recall that in a digraph, each edge $e$
has an {\em origin vertex} $o(e)$ and a {\em terminal vertex} $t(e)$. We often write $e = (o(e), t(e))$. Then the {\em inverse} of $e$ is
$\bar{e} = (t(e),o(e))$.

\begin{definition}
For a simple graph $G$, let $D(G)$ denote the associated {\em symmetric digraph}
on the same set of vertices and having $2|E(G)|$ directed edges. The {\em oriented line graph} of $G$ has 
vertices $\{E(D(G)\}$ (the edges of $D(G)$) and edges
$\{ (e_i,e_j) \in E(D(G)) \times E(D(G)) \mid \bar{e_i} \neq e_j, t(e_j) = o(e_i) \}$.
\end{definition}

\begin{proof} (of Theorem~\ref{thm:Main})
    In this argument, we largely use the ideas and notation of Storm and Scott's~\cite{SS} paper, with which we assume familiarity.
    Using Definition~\ref{def:T}, $\zeta_G(u)^{-1}=\det(I-u \T )$, where $\T$ is the adjacency matrix of the oriented line graph of G, $\LO G$.
    Storm and Scott~\cite{SS} use the characteristic polymonial of $\T$,
    $$\chi_{\T}(u) = \det( \T -uI) = u^{2m}+c_1 u^{2m-1} + ... + c_{2m},$$
    to find the following expression for the coefficients of the Ihara zeta function's reciprocal:
    $$\zeta_G(u)^{-1} = c_{2m} u^{2m}+c_{2m-1} u^{2m-1} + ... + c_1 u + 1.$$
    Note that $m = |E|$ is the number of edges of $G$.
    By~\cite[Lemma 12]{SS}, the coefficients of this characteristic polynomial, $c_i$, are given by $(-1)^i$ times the sum of all $i\times i$ principal minors of $\T$ (labelled $\det(\tilde{\T})$ in \cite{SS}). Next, \cite[Lemma 13]{SS} states that given a digraph $D$, with linear subgraphs $D_i$ for $i=1,...,n$, with $D_i$ having $e_i$ even cycles, then
    \begin{equation}
    \label{eqn:detA}
       \det(\A) = \sum_{i=1}^n (-1)^{e_i},
    \end{equation}   
    where $\A$ is the adjacency matrix of $D$. 
    Finally, in the proof of ~\cite[Theorem 7]{SS}, they consider $c_k$ for $2\leq k < 2m$. Then the $k \times k$ principal minors of $\T$ amount
    to choosing $k$ vertices of $\LO G$ and creating the subdigraph $\tilde{D}$ they induce. This subdigraph, $\tilde{D}$, 
    will be an element of $\mathscr{S}_k(\LO G)$, which is the set of subdigraphs on exactly $k$ vertices.

\begin{table}[b]
    \begin{center}
    
    \begin{tabular}{c|c|c|c}
         $k$ & $e(L)$ & $k + e(L)$ & $r(L)$ \\ \hline
         Even & Even & Even & Even \\
         Even & Odd & Odd & Odd \\
         Odd & Even & Odd & Odd \\
         Odd & Odd & Even & Even        
    \end{tabular} 
    
    \caption{
    \label{tbl:par}%
    Parities for $L$.}

    \end{center}
\end{table}
    
    Next, the principal minors will be the determinants of the adjacency minor $\tilde{T}$ of $\tilde{D}$. Let $\tilde{D}_i$ for $i=1,...,j$,
    be the linear subgraphs of $\tilde{D}$, with $\tilde{D}_i$ having $e(\tilde{D}_i)$ even cycles.
    Applying Equation~\ref{eqn:detA}, we have
    $$\det(\tilde{T}) = \sum_{\tilde{D}_i \subseteq \tilde{D}} (-1)^{e(\tilde{D}_i)}.$$
    Here is where we diverge from Scott and Storm to offer an alternate simplification of the zeta coefficients. 
    Summing over all principal minors:
    \begin{align*}
        c_k &=\sum_{\tilde{D} \in \mathscr{S}_k(\LO G)} (-1)^k \sum_{\tilde{D}_i \subseteq \tilde{D}} (-1)^{e(\tilde{D}_i)} \\
        &= \sum_{\tilde{D} \in \mathscr{S}_k(\LO G)}  \sum_{\tilde{D}_i \subseteq \tilde{D}} (-1)^k (-1)^{e(\tilde{D}_i)}
    \end{align*}
    As each $\tilde{D} \in \mathscr{S}_k(\LO G)$ will be formed from a unique set of $k$ vertices in $\LO G$, the set of all $\tilde{D}_i$ in the above double sum will be exactly $\mathscr{L}_k(\LO G)$ defined before, so we can simplify:
    \begin{align*}
        c_k &= \sum_{L \in \mathscr{L}_k(\LO G)} (-1)^k (-1)^{e(L)} \\
        &= \sum_{L \in \mathscr{L}_k(\LO G)} (-1)^{k + e(L)}
    \end{align*}
    Let $L \in \mathscr{L}_k(\LO G)$ and assume $k$ is odd. Then $k$ is the sum of the vertices in all cycles of $L$, so there must be an odd number of odd cycles. If the number of cycles $r(L)$ is even, then there are an odd number of even cycles, and if it is odd, then there are an even number of even cycles; the parity of $r(L)$ will be the opposite of $e(L)$. Similarly for $k$ even, there are an even number of odd cycles, so the parity of the number of even cycles $e(L)$ will match the parity of $r(L)$, 
    see Table~\ref{tbl:par}.
    
    As $r(L)$ and $k + e(L)$ have the same parity in all cases, we arrive at

    $$c_k = \sum_{L \in \mathscr{L}_k(\LO G)} (-1)^{r(L)} $$
\end{proof}

Next we prove Theorem~\ref{thm:even}, which is an observation that follows directly from Scott and Storm's~\cite{SS} approach.

\begin{proof} (of Theorem~\ref{thm:even})
The key observation is that a directed
cycle $\vec{C}$ in $\LO G$ corresponds to a closed path $C$ in $G$. 
This implies the $\vec{C}$ have even length when $G$ is bipartite as 
we now show.

Let $v_1, v_2, v_3, \ldots, v_n$ be the vertices of $\vec{C}$. Each $v_i$ corresponds to 
choosing an orientation of a corresponding edge $e_{v_i}$ in $G$. This means we
can make sense of the vertices $o(e_{v_i})$ and $t(e_{v_i})$ that are the induced
origin and terminus for $e_{v_i}$ under this orientation.
As we trace along $\vec{C}$ from $v_i$ to $v_{i+1}$, we move from $e_{v_i}$
to $e_{v_{i+1}}$ via the vertex $t(v_i) = o(v_{i+1})$. This means that the sequence of
edges $e_{v_1}, e_{v_2}, e_{v_3}, \ldots, e_{v_n}$ is a closed path $C$ that begins
and ends at $o(e_{v_1}) = t(e_{v_n})$.  

A closed path in a graph can be decomposed as the sum of a set of cycles and  
paths in the 
sense that they have the same edges. That is we can 
write $C = \gamma_1 + \gamma_2 + \cdots + \gamma_l + \rho_1 + \rho_2 + \cdots \rho_k$ 
where the $\gamma_i$ are cycles and the $\rho_j$ are paths. 
Note that each path is traversed twice (once in each direction) and,
since $G$ is bipartite, each cycle is even.
Consequently the length of $C$, $n$, is even, as is the length of $\vec{C}$.

We've argued that, when $G$ is bipartite, the directed cycles in $\LO G$ have even
length. This means that if $L \in \Lk (\LO G)$, (ie if $\Lk (\LO G)$ is nonempty)
then $k$ must be even as $L$ is
a disjoint union of directed cycles, each of which is of even length. 
By Theorem~\ref{thm:Main}, $c_k = 0$ when $k$ is odd.
\end{proof}

\section{Families of rank two graphs}
\label{sec:rk2}

In this section we use Theorem~\ref{thm:Main} to determine the zeta function for every graph of rank two. We begin by dividing the
simple graphs into three families, which, together, include every simple rank two graph.

\begin{definition}
Let $m,n > 2$.
A {\em double cycle graph} $G_{m,n} = C_m \dotcup C_n$, is the union of two cycles made by identifying one vertex of each.
The order and size are $|E(G_{m,n})| = m+n-1$ and $|V( G_{m,n} )| = m+n$.
\end{definition}

\begin{definition}
\label{def:Gmnp}%
Let $m,n > \max (p+1,2)$ with $p > 0$.
$G_{m,n,p} = C_m \cupn{p} C_n$, is the union of two cycles made by identifying $p$ consecutive edges of each.
 The order and size are $|E(G_{m,n,p})| = m+n-p-1$
and $|V(G_{m,n,p})| = m+n-p$.
\end{definition}

%Although we assume $p>0$, it's sometimes useful to write $G_{m,n,0}$ for $G_{m,n}$.

\begin{definition}
An {\em $H_{m,n,l}$ handcuff graph}, with $m,n>2$ and $l > 0$, is $C_m$ connected to $C_n$ by a path of $l$ edges. The order and size are 
$|V(H_{m,n,l})| = m+n+l-1$ and $|E( H_{m,n,l}) | = m+n+l$. 
\end{definition}

%Again, while we assume $l>0$, it's sometimes useful to write $H_{m,n,0}$ for $G_{m,n}$.

With the next three theorems, we give the Ihara zeta function for each of these families. In each case, we first state a lemma that
gives the structure of the oriented line graph $\LO G$.  

\begin{theorem}
\label{thm:Gmn}
For $3 \leq m \leq n$,
$$\zeta_{G_{m,n}}(u)^{-1} = -3u^{2(m+n)} + 2 u^{m+2n} + 2u^{2m+n} + u^{2n} + u^{2m} -2u^n -2u^m + 1.$$
\end{theorem}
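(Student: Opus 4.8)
The plan is to apply Theorem~\ref{thm:Main}, so that the proof reduces to enumerating the linear subgraphs of the oriented line graph $\LO G_{m,n}$ and, for each $k$, forming the signed count $c_k = \sum_{L \in \mathscr{L}_k(\LO G_{m,n})} (-1)^{r(L)}$. The first step is a structural lemma listing all the directed cycles of $\LO G_{m,n}$. As already used in the proof of Theorem~\ref{thm:even}, a directed cycle in $\LO G$ is exactly a reduced (non-backtracking, tailless) closed path in $G$ that repeats no directed edge. In $G_{m,n}$ every vertex except the shared vertex $v$ has degree~$2$, so such a path traverses each of $C_m$ and $C_n$ ``all at once'' and can switch between them only at $v$, subject to the no-backtracking condition there. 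Writing $\vec{A}^{+},\vec{A}^{-}$ for the two orientations of $C_m$ (viewed as $m$-element sets of directed edges) and $\vec{B}^{+},\vec{B}^{-}$ for those of $C_n$, the forbidden move at $v$ is precisely to pass directly from $\vec{A}^{+}$ to $\vec{A}^{-}$, or from $\vec{B}^{+}$ to $\vec{B}^{-}$. A short case analysis on which of these four ``arcs'' a directed cycle uses then yields exactly ten of them: the two orientations $\alpha^{\pm}$ of $C_m$ (length $m$); the two orientations $\beta^{\pm}$ of $C_n$ (length $n$); four ``figure eights'' $\gamma_1,\dots,\gamma_4$, one for each choice of an orientation of $C_m$ together with an orientation of $C_n$, spliced at $v$ (length $m+n$); and two ``double figure eights'' $\delta_1,\delta_2$ using all four arcs in one of the two cyclically consistent orders $(\vec{A}^{+},\vec{B}^{+},\vec{A}^{-},\vec{B}^{-})$ or $(\vec{A}^{+},\vec{B}^{-},\vec{A}^{-},\vec{B}^{+})$ (length $2m+2n$). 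In particular no directed cycle uses exactly three arcs.

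Since distinct vertices of $\LO G_{m,n}$ are distinct directed edges of $G_{m,n}$, two of these directed cycles are vertex-disjoint exactly when their underlying arc sets are disjoint, which can be read straight off the list; for instance $\alpha^{+}$ and $\alpha^{-}$ are disjoint, so are $\gamma_1$ and $\gamma_4$, while $\delta_1$ and $\delta_2$ meet every other cycle. Hence a linear subgraph of $\LO G_{m,n}$ is just a set of listed directed cycles whose arc sets form pairwise-disjoint subsets of the four-element set $\{\vec{A}^{+},\vec{A}^{-},\vec{B}^{+},\vec{B}^{-}\}$; in particular it has at most four components, so there are only finitely many, and they are quickly enumerated and sorted by total length $k$. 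For example $\mathscr{L}_m(\LO G_{m,n})$ is just $\{\alpha^{+}\}$ and $\{\alpha^{-}\}$, giving $c_m=-2$; $\mathscr{L}_{m+n}(\LO G_{m,n})$ consists of the four singletons $\{\gamma_i\}$ together with the four pairs $\{\alpha^{\pm},\beta^{\pm}\}$, with signed count $-4+4=0$; and $\mathscr{L}_{2m+2n}(\LO G_{m,n})$ consists of $\{\delta_1\},\{\delta_2\}$ (with $r=1$), $\{\gamma_1,\gamma_4\},\{\gamma_2,\gamma_3\}$ ($r=2$), the four triples made of one $\alpha$, one $\beta$, and one $\gamma$ that together exhaust all four arcs ($r=3$), and $\{\alpha^{+},\alpha^{-},\beta^{+},\beta^{-}\}$ ($r=4$), with signed count $-2+2-4+1=-3$. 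Running the same bookkeeping for the remaining lengths $n$, $2m$, $2n$, $2m+n$, $m+2n$ produces the rest of the coefficients and matches the stated polynomial.

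The step I expect to be the real obstacle is making the enumeration of linear subgraphs both exhaustive and sign-correct: one must not overlook the two length-$(2m+2n)$ double figure eights (the least obvious directed cycles), must verify that no directed cycle uses exactly three arcs, and must keep straight which triples of cycles are simultaneously vertex-disjoint. A last, minor point is that the exponents $m$, $n$, $2m$, $2n$, $m+n$, $m+2n$, $2m+n$, $2m+2n$ need not all be distinct, most visibly when $m=n$; but since the displayed polynomial is obtained by collecting the finitely many linear subgraphs according to total length, the identity holds verbatim for all $3\le m\le n$, and it is cleanest to carry out the computation first under the assumption $m<n$, where the eight exponents are distinct, and then observe that collecting like terms when $m=n$ is consistent.
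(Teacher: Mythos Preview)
Your argument is correct and follows the same overall strategy as the paper: apply Theorem~\ref{thm:Main} and enumerate the linear subgraphs of $\LO G_{m,n}$, arriving at the same ten directed cycles (two of length $m$, two of length $n$, four of length $m+n$, two of length $2(m+n)$) and the same signed counts.

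The one genuine difference is in how the directed cycles are identified. The paper first proves a structural lemma describing $\LO G_{m,n}$ explicitly as a ``cube with four wings'' and then reads the cycles off that picture. You instead invoke the correspondence between directed cycles in $\LO G$ and non-backtracking tailless closed walks in $G$ with no repeated oriented edge, and carry out the enumeration entirely inside $G_{m,n}$ via the four arcs $\vec A^{\pm},\vec B^{\pm}$. This sidesteps building $\LO G_{m,n}$ at all, and your disjointness criterion (cycles are vertex-disjoint in $\LO G$ iff their arc sets are disjoint) makes the linear-subgraph count very transparent. You are also more careful on two points the paper glosses over: you explicitly compute $c_{m+n}=-4+4=0$ (the paper simply omits $m+n$ from its list of possible $k$), and you flag the possibility that exponents coincide (e.g.\ when $m=n$), noting that the formula still holds verbatim after collecting like terms.
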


%\begin{remark}
%When $m=n$, this factors:
%$$\zeta_{G_{m,m}}(u)^{-1} = -(3u^m-1)(u^{2m}-1)(u^m-1).$$
%\end{remark}

To prove the theorem, we'll first determine the structure of the oriented line graph, 
$\LO G_{m,n}$.

\begin{lemma}
\label{lem:Gmn}
Let $3 \leq m \leq n$.
The oriented line graph $\LO G_{m,n}$ consists of a cube with four `wings'. 
Denote the vertices of the cube $v_{a,b,c}$ with $(a,b,c) \in \{0,1\}^3$
so that the $(a,b,c)$ are the corresponding points on a cube in $\R^3$.
Orient edges of the cube so that $v_{a,b,c}$ is a source (outdegree 3) if
$(a,b,c)$ has an even number of 1's and a sink (indegree 3) otherwise. 
The four wings are oriented cycles each using one of the vertical edges
of the cube. The wings alternate $C_m$ and $C_n$ cycles as we go
through the four vertical edges of the cube.
\end{lemma}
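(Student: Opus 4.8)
The plan is to trace through the definition of the oriented line graph directly. Write $w$ for the vertex shared by the two cycles and label them $w=u_0,u_1,\dots,u_{m-1}$ and $w=t_0,t_1,\dots,t_{n-1}$, so that $w$ has degree $4$ and every other vertex has degree $2$. The vertices of $\LO G_{m,n}$ are the $2(m+n)$ directed edges of $D(G_{m,n})$, and I would partition them into the eight directed edges incident to $w$ and the remaining $2(m+n)-8$ \emph{internal} directed edges, each lying strictly inside $C_m$ or $C_n$. The isomorphism type of $\LO G_{m,n}$ will then follow from understanding each of the two pieces and the edges running between them.

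First I would handle the internal vertices. Since every vertex of $G_{m,n}$ other than $w$ has degree $2$, at each such vertex a non-backtracking walk must continue in the unique available direction, so in $\LO G_{m,n}$ the internal vertices fall into exactly four directed paths: for each of $C_m,C_n$ and each of its two orientations, the correspondingly oriented internal directed edges form a directed path. These four paths have $m-2,\,m-2,\,n-2,\,n-2$ vertices (all positive since $m,n>2$) and are the interiors of the four ``wings''; I would record which pair of $w$-incident vertices each path attaches to, since this is what forces the alternation of $C_m$- and $C_n$-wings.

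Next I would analyse the eight $w$-incident vertices. Write $O_1,\dots,O_4$ for the directed edges with origin $w$ and $I_1,\dots,I_4$ for those with terminus $w$, indexed so that $\bar O_k=I_k$. Reading off the adjacency rule for $\LO G$, the only edges among these eight run from some $O_k$ to some $I_\ell$ with $\ell\neq k$; that is, they form $K_{4,4}$ with the perfect matching $O_1I_1,\dots,O_4I_4$ deleted, oriented from the $O$'s to the $I$'s. It is well known that $K_{4,4}$ with a perfect matching removed is the $1$-skeleton of the $3$-cube, the two parts of $K_{4,4}$ becoming the two parity classes and the deleted matching becoming the four antipodal pairs. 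Under such an identification the $O$'s --- which have indegree $0$ and outdegree $3$ within this subgraph --- form a source class, and may be taken to be the even-parity vertices, exactly as in the statement.

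It remains to attach the wings and pin down coordinates. Each internal path, together with the single cube-edge joining the two cube vertices it attaches to, forms an oriented cycle of length $m$ or $n$; a direct check of the non-backtracking transitions at $w$ shows these cube-edges are $O_1\to I_2$ and $O_2\to I_1$ for $C_m$, and $O_3\to I_4$ and $O_4\to I_3$ for $C_n$. I would then choose the bijection from the eight $w$-incident vertices to $\{0,1\}^3$ so that the $O$'s are the even-parity strings and these four edges are exactly the four vertical edges of the cube; a short verification confirms that this bijection is consistent with the full cube adjacency and the deleted matching, and that reading the four vertical edges around a face of the cube meets wings of lengths $m,n,m,n$ in that cyclic order. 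Assembling the cube, the four wing-paths, and the way each is closed up by its vertical edge then gives the stated description. I expect the only genuine obstacle to be organizational: keeping the $\LO G$ orientation convention straight so that the $w$-incident vertices come out as sources and sinks the right way round, and exhibiting a single labeling of the cube that simultaneously makes the sources even-parity, the wing-edges vertical, and the $C_m/C_n$ pattern alternate. No inequality or estimate is involved --- once the bookkeeping is set up, the structure is forced.
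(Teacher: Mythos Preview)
Your approach is essentially the same as the paper's: partition the $2(m+n)$ vertices of $\LO G_{m,n}$ into the eight directed edges incident to the degree-$4$ vertex $w$ (the ``cube'') and the remaining internal edges (the ``wings''), then read off the structure. Your write-up is more explicit than the paper's, in particular the observation that the eight $w$-incident vertices form an oriented copy of $K_{4,4}$ minus a perfect matching, which is the $3$-cube, is a nice way to pin down the structure that the paper leaves implicit.

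One caveat on the bookkeeping you flagged: under the standard non-backtracking convention (edge $e\to f$ in $\LO G$ when $t(e)=o(f)$ and $f\neq\bar e$), the directed edges \emph{terminating} at $w$ --- your $I_k$'s --- are the sources in the cube, and the $O_k$'s are the sinks, opposite to what you wrote. The paper's own proof states it this way (``the four edges that terminate at $x$ are the four sources''), even though the paper's formal definition of $\LO G$ appears to have the indices transposed. This swap is harmless for the lemma since reversing all arrows gives an isomorphic digraph, but you should align your source/sink assignment with whichever convention you fix.
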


As an example, Figure~\ref{fig:LOG34} (produced using Sage~\cite{S}) shows $\LO G_{3,4}$.
\begin{figure}[htb]
\centering
\includegraphics[scale = 0.5]{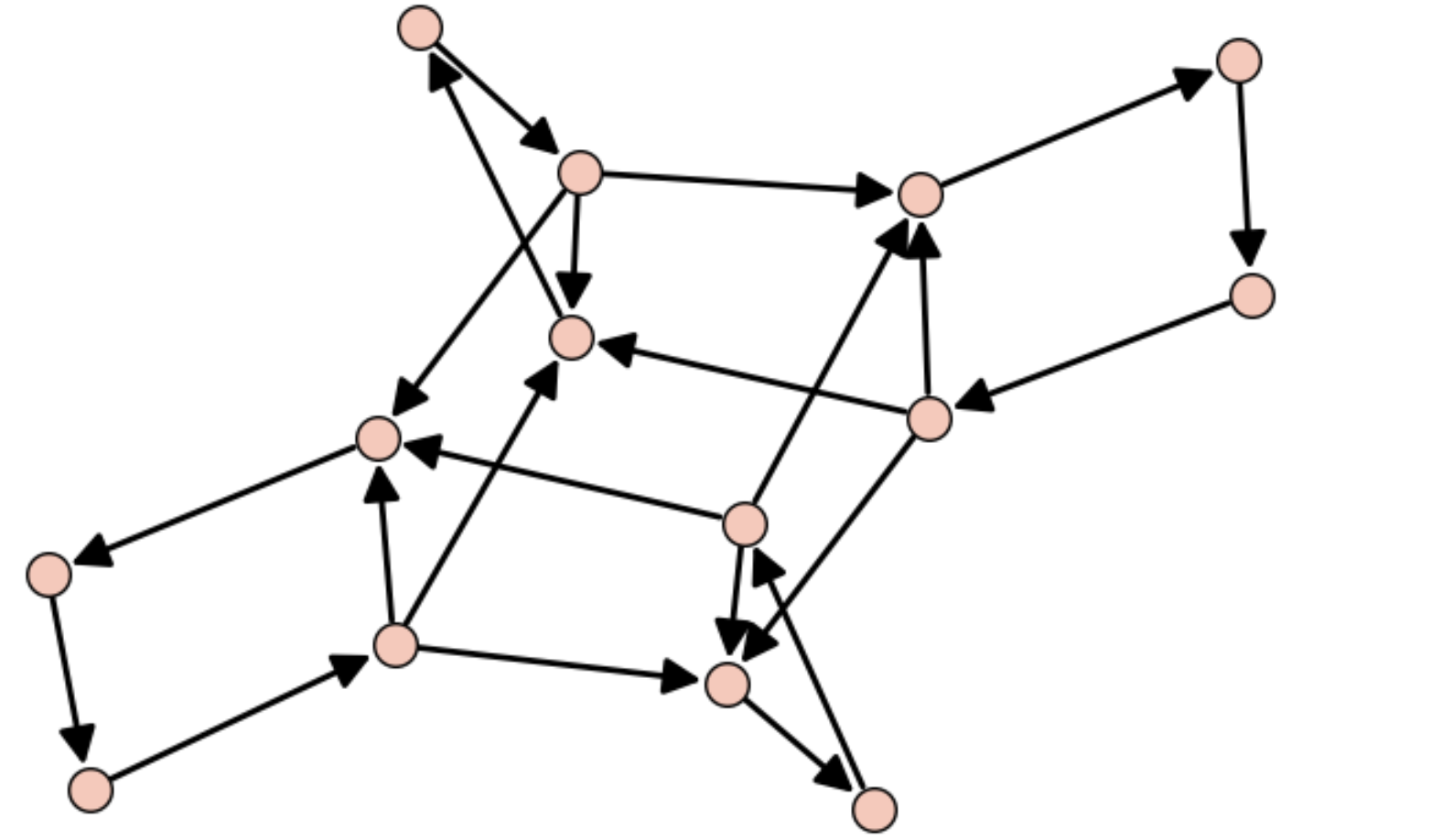}
\caption{The oriented line graph $\LO G_{3,4}$}
\label{fig:LOG34}
\end{figure}

\begin{proof}
In $G_{m,n}$, let $x$ denote the common vertex of the two cycles and 
$v_2, \ldots, v_{m}$
and $w_2, \ldots, w_n$ the remaining vertices of each cycle, in order as
we traverse the cycles. Then $N(x) = \{v_2, v_m, w_2, w_m\}$ and the 
vertices of the cube in $\LO G_{m,n}$ are the eight directed edges 
incident on $x$ in the symmetric digraph $D(G_{m,n})$. The four edges
that terminate at $x$ are the four sources in the cube and those that
originate at $x$ are sinks.

The remaining edges of $C_m$ and $C_n$ generate the wings. Each of the 
two directed cycles around $C_m$ in $D(G_{m,n})$ generates one wing in $\LO G_{m,n}$
and similarly for $C_n$.
\end{proof}

\begin{proof} (of Theorem~\ref{thm:Gmn}).
To prove the theorem we need to study the linear subgraphs of $\LO G_{m,n}$. Since
each vertex of the cube is either a source or a sink, in a linear subgraph
each cube vertex has at most one edge from the cube. Then it must have exactly one edge from the cube and one edge from a wing.
It follows that a cycle in a linear subgraph is either a wing, a $C_{m+n}$ cycle
using top and bottom edges of a cube face, or else a $C_{2(m+n)}$ cycle that uses
all four wings.

By Theorem~\ref{thm:Main}, for each $k$, we must analyze the linear subgraphs
on $k$ vertices, $\Lk (\LO G)$.
Given our listing of the cycles of $\LO G$ above, the possible values of $k$ are
in $\{ 2(m+n), m+2n, 2m+n, 2m, 2n, m, n\}$. Below we consider each of
these values in turn and determine $c_k$.

For $k = 2(m+n)$, there are four ways to make a linear subgraph: 
A single $2(m+n)$ cycle, two $m+n$ cycles, an $m+n$ cycle with two wings 
(one a $C_m$, the other a $C_n$), or else by using all four wings.
	
	\begin{table}[htb]
	    \centering
	    \begin{tabular}{l|c|c|c}
            Type of $L$ & Occurrences & $r(L)$ & Contribution\\ \hline
            $2(m+n)$ cycle  & 2 & 1 & -2 \\ \hline
            Two $m+n$ cycles & 2 & 2 & 2 \\ \hline
            $m+n$ cycle and & 4 & 3 & -4 \\
            two wings & & & \\ \hline
            four wings & 1 & 4 & 1
	    \end{tabular}
	    \caption{Linear subgraphs for $k = 2(m+n)$}
	    \label{tab:2m2nCase}
	\end{table}

Table~\ref{tab:2m2nCase} summarizes our calculation of $c_{2(m+n)} = -2 + 2 -4 + 1 = -3$.
For each linear subgraph $L$, we give the number of occurrences of $L$ in $\LO G_{m,n}$
(that is the number of subgraphs of $\LO G_{m,n}$ isomorphic to $L$)
and the rank, $r(L)$, or number of cycles in $L$. By Theorem~\ref{thm:Main}, each occurence
of $L$ contributes $(-1)^{r(L)}$ to $c_{2(m+n)}$.

	\begin{table}[htb]
	    \centering
	    \begin{tabular}{l|c|c|c}
            Type of $L$ & Occurences & $r(L)$ & Contribution \\ \hline
            $m+n$ cycle & 4 & 2 & 4 \\
            and $n$ wing & & & \\ \hline
            three wings & 2 & 3 & -2
	    \end{tabular}
	    \caption{Linear subgraphs for $k = m+2n$, $m$ even}
	    \label{tab:m2nCase}
	\end{table}

For $k = m+2n$, Table~\ref{tab:m2nCase} illustrates the calculation $c_{m+2n} = 4 -2 = 2$.
Determining that $c_{n+2m} = 2$ as well is analogous.

A linear subgraph of order $2m$ necessarily consists of the two $C_m$ wings and
such a subgraph occurs only once, so $c_{2m} = (-1)^2 = 1$.
Similarly, $c_{2n} = 1$.
A linear subgraph of order $m$ is a single wing, but there are two occurences
of such a wing in $\LO G_{m,n}$: $c_m = (-1)^1 + (-1)^1  = -2$ and,
similarly, $c_n = -2$.
Finally, recall (for example, see \cite{SS,T}) that the constant coefficient for $\zeta_G(u)^{-1}$ is 1
for any graph $G$.
\end{proof}

\begin{theorem}
\label{thm:Gmnp}
For $m,n > \max (p+1,2)$ with $p > 0$,
\begin{align*}
    \zeta_{G_{m,n,p}}(u)^{-1} =& -4u^{2m+2n-2p} + u^{2m+2n-4p}+2u^{m+2n-2p}
+2u^{2m+n-2p} \\
& \mbox{ } + u^{2n} + u^{2m} + 2u^{m+n} -2u^{m+n-2p} -2u^n-2u^m+1.
\end{align*}
\end{theorem}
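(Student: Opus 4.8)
The plan is to follow exactly the same strategy used for Theorem~\ref{thm:Gmn}: first establish a structural lemma describing $\LO G_{m,n,p}$, then enumerate the linear subgraphs on $k$ vertices for each relevant $k$ and apply Theorem~\ref{thm:Main}. So I would begin by stating and proving a lemma analogous to Lemma~\ref{lem:Gmn}. In $G_{m,n,p}$ the two cycles share a path $P$ of $p$ edges (hence $p+1$ vertices); the two endpoints of $P$ are the only vertices of degree three, and every internal vertex of $P$ has degree two, as do the remaining vertices of the two cycles. I expect $\LO G_{m,n,p}$ to again consist of a finite ``core'' gadget built from the directed edges incident to the two degree-three vertices, together with several oriented ``wings'' coming from the runs of degree-two vertices. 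There are three such runs in $G_{m,n,p}$: the shared path $P$ (length $p$), the rest of $C_m$ (length $m-p$), and the rest of $C_n$ (length $n-p$); each run, traversed in both directions in $D(G_{m,n,p})$, contributes directed arcs to $\LO G$. Working out precisely how the arcs through the two branch vertices connect these wings — and in particular how the two orientations of $P$ behave — is the content of the lemma.

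Once the structure is pinned down, the second step is the bookkeeping: list all cycles that can occur in a linear subgraph of $\LO G_{m,n,p}$. Guided by the exponents appearing in the claimed formula, the cycle lengths should be $m$, $n$, $m+n$ (a cycle avoiding the shared path, or using it in a way that cancels), $m+n-2p$ (one using the shared path segment), $m+2n-2p$, $2m+n-2p$, $2n$, $2m$, $2m+2n-4p$, and $2m+2n-2p$; the set of achievable $k$ in Theorem~\ref{thm:Main} is then $\{2m{+}2n{-}2p,\ 2m{+}2n{-}4p,\ m{+}2n{-}2p,\ 2m{+}n{-}2p,\ 2n,\ 2m,\ m{+}n,\ m{+}n{-}2p,\ n,\ m\}$, matching the ten nonconstant terms. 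For each such $k$ I would build a table, exactly as in Tables~\ref{tab:2m2nCase} and~\ref{tab:m2nCase}, recording each isomorphism type of linear subgraph $L$ on $k$ vertices, its number of occurrences in $\LO G_{m,n,p}$, its number of cycles $r(L)$, and the contribution $(\text{occurrences})\cdot(-1)^{r(L)}$; summing the contributions gives $c_k$. The largest case, $k = 2m+2n-2p$ (the full order of the relevant part of $\LO G$), will have the most types — single large cycle, two medium cycles, a medium cycle plus wings, all-wings — and should produce the coefficient $-4$. Finally, $c_0 = 1$ as for every graph.

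The main obstacle I anticipate is the combinatorial case analysis for $k = 2m+2n-2p$ and for the mixed exponents like $m+2n-2p$: because the shared path $P$ can be traversed in two directions, there is genuine interaction between the wings that was absent in the $G_{m,n}$ case, where the cube vertices forced each linear subgraph to pick exactly one cube edge and one wing edge per vertex. Here I will need to argue carefully that certain combinations of arcs through the two branch vertices either do or do not close up into disjoint directed cycles, and to count occurrences without double-counting. A secondary subtlety is the parity/degeneracy constraint $m,n > \max(p+1,2)$, which must be invoked to ensure the wings are genuinely distinct cycles and that no two of the listed exponents collide in a way that would merge terms; I would note explicitly where that hypothesis is used (for instance, to guarantee $m+n-2p$ and $m$, $n$ are all positive and distinct from the larger exponents). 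Once the lemma is in hand, the rest is routine table-filling of the same flavor already demonstrated, so I would present the lemma and its proof in full detail and then compress the ten coefficient computations into a short sequence of tables with one or two sentences of explanation each.
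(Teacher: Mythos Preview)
Your plan is correct and essentially identical to the paper's: the paper states the structural lemma you anticipate (describing $\LO G_{m,n,p}$ as two hexagons joined by six directed vertical paths of lengths $p-1$, $m-p-1$, $n-p-1$, $p-1$, $m-p-1$, $n-p-1$, with hexagon vertices alternating sink/source so that every cycle through such a vertex uses exactly one hexagon edge and one vertical-path edge), and then enumerates the linear subgraphs for each $k$ via Theorem~\ref{thm:Main} just as you propose. One small correction that will streamline your bookkeeping: the only \emph{single} directed cycles in $\LO G_{m,n,p}$ have lengths $m$, $n$, $m+n-2p$, and $2(m+n-p)$; the remaining exponents in your list ($2m$, $2n$, $m+n$, $2m+2n-4p$, $m+2n-2p$, $2m+n-2p$) arise only as sums of two or three disjoint wings, so for example $c_{m+n}=2$ comes from two occurrences of an $m$-wing paired with an $n$-wing, not from any cycle of length $m+n$.
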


\begin{remark}
This reproduces \cite[Corollary 19]{SS} when $p=1$, after replacing $m$ by $k+1$ and
$n$ by $n-k+1$.
\end{remark}

%\begin{remark}
%When $m = n$, $u^{2m+2p} - 1$ is a factor of 
%$\zeta_{G_{m,n,p}}(u)^{-1}$.
%\end{remark}

\begin{lemma}
The oriented line graph $\LO G_{m,n,p}$ consists of two hexagons,
which we'll call the `top' and `bottom' hexagon, along with six directed vertical paths.
Proceeding around the top hexagon, one alternates
sink and source vertices. Each sink vertex is also the origin of a `vertical' directed path to the bottom
hexagon and each source vertex is also the terminus of a vertical directed path from the bottom
hexagon. The edge lengths of these vertical paths cycle
through $p-1, m-p-1, n-p-1, p-1, m-p-1, n-p-1$. Similarly, moving around
the bottom hexagon, we alternate between sink and source vertices.
\end{lemma}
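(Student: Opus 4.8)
The plan is to reconstruct $\LO G_{m,n,p}$ from purely local data. First, note that the hypotheses $p\ge 1$ and $m,n>\max(p+1,2)$ give $m-p\ge2$ and $n-p\ge2$, so $G_{m,n,p}$ is a theta graph: it has exactly two vertices $X$ and $Y$ of degree $3$, joined by three internally disjoint paths $\pi_P,\pi_Q,\pi_R$ with $p$, $m-p$, and $n-p$ edges respectively, where $\pi_P$ is the path of $p$ identified edges; every remaining vertex has degree $2$. I would then record two local facts about the oriented line graph. (i) At a degree-$2$ vertex a non-backtracking walk cannot turn, so along each $\pi_\gamma$, in each of its two orientations, the corresponding arcs of $D(G_{m,n,p})$ form a single directed path in $\LO G_{m,n,p}$ with one fewer edge than $\pi_\gamma$ has. (ii) At a degree-$3$ vertex $v$, the six arcs of $D(G_{m,n,p})$ incident to $v$ — three outgoing and three incoming — span in $\LO G_{m,n,p}$ exactly the $6$ edges from each outgoing arc to the two incoming arcs other than its reverse; this is $K_{3,3}$ minus a perfect matching, hence a $6$-cycle, in which the outgoing arcs are the sources, the incoming arcs the sinks, and the two kinds alternate. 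That there are no other edges among the six arcs is immediate from the edge rule $t(e_j)=o(e_i)$ together with the condition $\bar{e_i}\neq e_j$, since any two of these arcs share only the vertex $v$ or a single neighbour of $v$.

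Assembling (i) and (ii) gives the lemma. The $6$-cycle at $X$ is the top hexagon, the one at $Y$ the bottom. For each $\gamma\in\{P,Q,R\}$, following $\pi_\gamma$ from $X$ to $Y$ gives a directed path in $\LO G_{m,n,p}$ that starts at the incoming arc of $\pi_\gamma$ at $X$ (a sink of the top hexagon), runs through the degree-$2$ chain from (i), and ends at the outgoing arc of $\pi_\gamma$ at $Y$ (a source of the bottom hexagon); its length is one less than the number of edges of $\pi_\gamma$. Following $\pi_\gamma$ the other way gives such a path running the other direction. These six ``vertical'' paths realize exactly the claim that each sink is the origin of a vertical directed path to the other hexagon and each source the terminus of one. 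Finally, fixing a cyclic orientation of the $X$-hexagon and recording which of $\pi_P,\pi_Q,\pi_R$ each of its six vertices belongs to shows the vertices meet the three paths in the repeating cyclic order $P,Q,R,P,Q,R$; hence the attached vertical-path lengths cycle through $p-1$, $m-p-1$, $n-p-1$, $p-1$, $m-p-1$, $n-p-1$. The bottom hexagon is handled identically.

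Two details deserve care. When $p=1$ the path $\pi_P$ is a single edge, so its two arcs are simultaneously vertices of both hexagons and the two ``vertical paths'' along $\pi_P$ have length $p-1=0$; the statement is to be read with the understanding that a length-$0$ vertical path simply identifies a sink of one hexagon with a source of the other. I would also check that the two hexagons and six vertical paths exhaust $\LO G_{m,n,p}$: any edge $(e_i,e_j)$ determines a common vertex $w=o(e_i)=t(e_j)$ of $G_{m,n,p}$, and if $w$ is a branch vertex the edge lies in $w$'s hexagon, while if $w$ has degree $2$ it lies on a vertical path (or attaches one to a hexagon when $w$ neighbours a branch vertex); a vertex count — twelve hexagon vertices together with the interiors of the six vertical paths, totalling $2(m+n-p)=2|E(G_{m,n,p})|$ — confirms nothing is missed. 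The main obstacle is the bookkeeping in step (ii) and in the assembly: nailing down the $6$-cycle, the source/sink alternation, which sink connects to which source via which orientation of which $\pi_\gamma$, the cyclic order $P,Q,R,P,Q,R$, and the $p=1$ degeneracy. Once these are settled, the lemma is immediate.
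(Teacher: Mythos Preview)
Your proposal is correct and is exactly the argument the paper has in mind: the paper omits the proof entirely, saying only that it is similar to the proof of Lemma~3.2, and your local analysis at the two degree-$3$ vertices (yielding the two hexagons) together with the degree-$2$ chains (yielding the six vertical paths) is precisely that analogy carried out in detail. Your handling of the $p=1$ degeneracy and the vertex count $2(m+n-p)=2|E(G_{m,n,p})|$ are welcome additions that the paper does not make explicit.
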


\begin{figure}[htb]
\centering
\includegraphics[scale = 0.5]{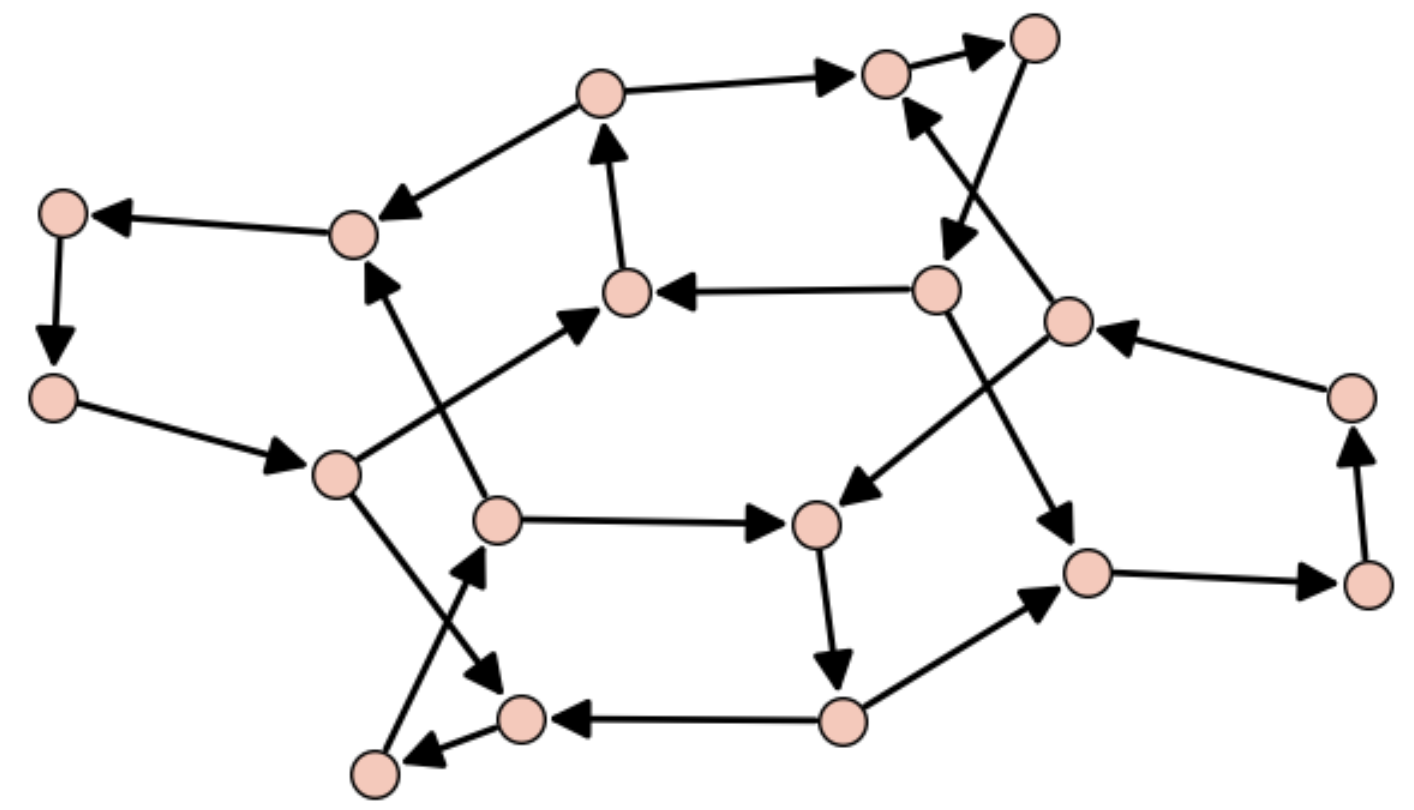}
\caption{The oriented line graph $\LO G_{5,6,2}$}
\label{fig:LOG563}
\end{figure}

\begin{figure}[htb]
\centering
\includegraphics[scale = 0.5]{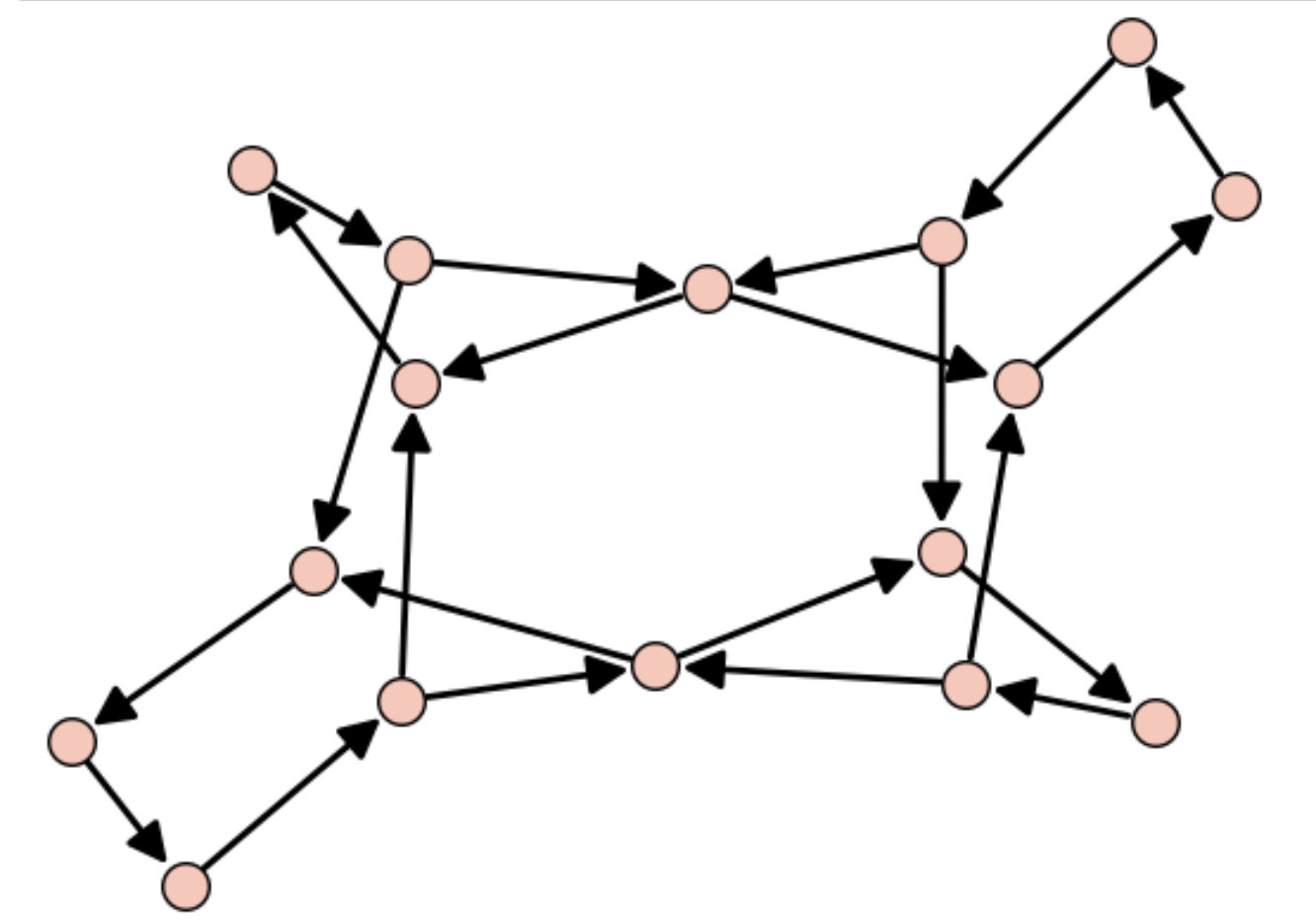}
\caption{The oriented line graph $\LO G_{4,5,1}$}
\label{fig:LOG452}
\end{figure}

Figures~\ref{fig:LOG563} and \ref{fig:LOG452} (produced using Sage~\cite{S}) are
representative. In Figure~\ref{fig:LOG452}, which shows $\LO G_{4,5,2}$, the
hexagons are vertical and might better be called `front' and `back.' Since $p = 1$,
the two hexagons are pinched together with a vertex identified at top and bottom.
We leave a proof of the lemma to the reader. The argument is similar to the proof of Lemma~\ref{lem:Gmn}.

\begin{proof} (of Theorem~\ref{thm:Gmnp})
Similar to the proof of Theorem~\ref{thm:Gmn}, since the hexagon vertices are sinks or sources, each linear
graph through the vertex uses one edge of the hexagon and one edge of its vertical path.
It follows that a directed cycle in a linear subgraph is either a `wing' that runs up and down 
between the two hexagons using neighboring vertical paths attached with
one edge in the top hexagon and one in the bottom, or else it is a 
$2(m+n-p)$-cycle that passes through all the vertical paths, alternating one edge in the top
hexagon and one edge in the bottom hexagon to move from one vertical path to the next.
The lengths of the wings are $m$  (using a $p-1$ and an $m-p-1$ vertical path), 
$n$ (including $p-1$ and $n-p-1$ paths), and $m+n-2$ ($m-p-1$
and $n-p-1$ paths).

Given our listing of the cycles of $L$ above, the possible values of $k$ are
in 
$$\{ 2(m+n-p),  2(m+n-2p), m+2n-2p, 2m+n-2p, 2n, 2m, m+n, m+n-2p, m+n, n, m \}.$$ 
Below we consider each of these values in turn and determine $c_k$.

	\begin{table}[htb]
	    \centering
	    \begin{tabular}{l|c|c|c}
            Type of $L$ & Occurrences & $r(L)$ & Contribution\\ \hline
            $2(m+n-p)$ cycle  & 2 & 1 & -2 \\ \hline
            $m+n-2p$ cycle and & 2 & 3 & -2 \\
            $m$ and $n$ wing & & & 
	    \end{tabular}
	    \caption{Linear subgraphs for $k = 2(m+n-p)$}
	    \label{tab:2mnpCase}
	\end{table}
Table~\ref{tab:2mnpCase} shows that $c_{2(m+n-p)} = -4$.

For $k = 2(m+n-2p)$, there is only one $L$ that uses the two $m+n-2p$ wings, so $c_2(m+n-2p) = (-1)^2 = 1$.
Similarly, when $k = 2n$ or $2m$, we use the two $n$ wings or $m$ wings and $c_k = 1$.

For $k = m + 2n - 2p$ (and similarly, for $2m+n-p$), $L$ is the combination of an
$m+n-p$ wing and an $n$ wing. This subgraph $L$ 
occurs twice in $\LO G_{m,n,p}$, so $c_{m+2n-2p} = (-1)^2 + (-1)^2  = 2$. 
Similarly, for $k = m+n$ we combine an $m$-wing and an $n$-wing, this arrangement
occurring twice, so $c_{m+n} = 2$.

Finally for $k \in \{m+n-2p, n, m\}$, $L$ is a single wing that occurs twice
in $\LO G_{m,n,p}$, so that $c_k = (-1)^1 + (-1)^1 = -2$.
\end{proof}

%Possible cycles (Figure~\ref{fig:LOG452})
%\begin{itemize}
%    \item $m$ - $m$ wing (2 kinds)
%    \item $n$ - $n$ wing (2 kinds)
%    \item $2m$ - two opposite wings (unique)
%    \item $2n$ - two opposite wings (unique)
%    \item $m+n$ - two adjacent wings not sharing the pinch point (2 kinds)
%    \item $m+n-2$ - `sides' not passing through pinch points (2 kinds)
%    \item $2m+n-2$ - one `side' with opposite $m$ wing (2 kinds)
%    \item $m+2n-2$ - one `side' with opposite $n$ wing (2 kinds)
%    \item $2m+2n-4$ - both `sides' (unique)
%    \item $2m+2n-2$ - all vertices (2 kinds), one `side' with both opposite wings (2 kinds)
%\end{itemize}

%Possible cycles (Figure~\ref{fig:LOG563})
%\begin{itemize}
%    \item $m$ - $m$ wing (2 kinds)
%    \item $n$ - $n$ wing (2 kinds)
%    \item $2m$ - two opposite wings (unique)
%    \item $2n$ - two opposite wings (unique)
%    \item $m+n$ - two adjacent wings not sharing a $p$ vertical path (2 kinds)
%    \item $m+n-2p$ - `sides' not passing through pinch points (2 kinds)
%    \item $2m+n-2p$ - one `side' with opposite $m$ wing (2 kinds)
%    \item $m+2n-2p$ - one `side' with opposite $n$ wing (2 kinds)
%    \item $2m+2n-4p$ - both `sides' (unique)
%   \item $2m+2n-2p$ - all vertices (2 kinds), one `side' with both opposite wings (2 kinds)
%\end{itemize}

\begin{theorem}
\label{thm:Hmnl}
For $m,n>2$ and $l > 0$,
\begin{align*}
 \zeta_{H_{m,n,l}}(u)^{-1} =& -4u^{2m+2n+2l}+u^{2m+2n} + 4u^{2m+n+2l} + 4u^{m+2n+2l}-2u^{2m+n}-2u^{m+2n} \\
 & \hspace{0.5 in} -4u^{m+n+2l}+ 4u^{m+n} +u^{2n} + u^{2m} - 2u^n-2u^m+1.
 \end{align*}
 \end{theorem}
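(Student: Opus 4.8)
The plan is to follow the same template used for Theorems~\ref{thm:Gmn} and \ref{thm:Gmnp}: first establish the structure of the oriented line graph $\LO H_{m,n,l}$ as a lemma, then enumerate the linear subgraphs by their vertex count $k$, and finally read off $c_k$ from Theorem~\ref{thm:Main}. First I would prove the structural lemma. In $H_{m,n,l}$, write $x$ for the cut vertex where the $C_m$ meets the connecting path and $y$ for the cut vertex where the path meets $C_n$. The vertices of $\LO H_{m,n,l}$ of degree exceeding two come from the directed edges incident to $x$ and to $y$ in the symmetric digraph; each of $x$ and $y$ has degree $3$, so this produces two ``$K_{3,3}$-type'' gadgets (three sources, three sinks at each), analogous to the single cube in Lemma~\ref{lem:Gmn}. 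The remaining directed edges generate wings: each of the two orientations of $C_m$ gives a wing of length $m$ attached at $x$'s gadget, each orientation of $C_n$ gives a length-$n$ wing at $y$'s gadget, and each orientation of the connecting path gives a directed path of length $l$ from one gadget to the other. I would note that every gadget vertex is a source or a sink, so in any linear subgraph each such vertex is met by exactly one ``horizontal'' edge (inside the gadget, used to transition between the two path-ends/wing-ends there) and one edge from a wing or the connecting path.

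Next I would classify the directed cycles that can appear in a linear subgraph. By the source/sink constraint, a cycle either (i) is a wing entirely within one gadget region --- giving cycles of length $m$, $n$, or $m+l+l = $ hmm, more carefully: at $x$'s gadget the two $C_m$ orientations combine into a length-$m$ wing, similarly length $n$ at $y$, and there is a cycle of length $m+n+2l$ built from both connecting-path directions plus one transition edge in each gadget --- or (ii) cycles that additionally route through the connecting path on only one side. Working out exactly which lengths arise (and with what multiplicity) is the crux: the monomials in the claimed formula have exponents in $\{2m+2n+2l,\,2m+2n,\,2m+n+2l,\,m+2n+2l,\,2m+n,\,m+2n,\,m+n+2l,\,m+n,\,2n,\,2m,\,n,\,m,\,0\}$, so I expect the cycle lengths available to be $m$, $n$, $m+n+2l$ (wings/loops) together with a ``grand'' cycle of length $2m+2n+2l$, and the exponents are then additive combinations of these realized as disjoint unions. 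For each exponent $k$ I would build a small table --- exactly as in Tables~\ref{tab:2m2nCase}, \ref{tab:m2nCase}, \ref{tab:2mnpCase} --- listing each isomorphism type of $L \in \Lk(\LO H_{m,n,l})$, its number of occurrences in $\LO H_{m,n,l}$, its number of cycles $r(L)$, and the contribution $(\text{occurrences})\cdot(-1)^{r(L)}$. Summing the contributions in each table yields $c_k$, and the constant term $c_0 = 1$ comes for free as usual.

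The main obstacle will be the bookkeeping in case (ii) and the multiplicity counts: because there are now \emph{two} gadgets rather than one, a linear subgraph can independently choose how the $C_m$ side, the $C_n$ side, and the connecting path are routed, and some exponents (for instance $2m+n+2l$ or $m+n+2l$) are hit by more than one type of $L$ with different values of $r(L)$, so sign cancellations must be tracked carefully. A secondary subtlety is confirming which of these exponents genuinely collide for generic $m,n,l$ versus which could coincide for special small values --- the theorem is stated for all $m,n > 2$ and $l > 0$, so I should check that the listed monomials are the correct grouping and that no further collapse occurs (or, if it does for edge cases, that combining like terms still gives the stated polynomial). Once the structural lemma and the per-$k$ tables are in place, assembling $\zeta_{H_{m,n,l}}(u)^{-1} = \sum_k c_k u^k$ is immediate, and I would present the lemma's proof briefly (or leave it to the reader, as was done for $\LO G_{m,n,p}$) and give the tables in full.
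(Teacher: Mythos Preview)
Your plan is the same as the paper's --- structural lemma for $\LO H_{m,n,l}$, enumerate linear subgraphs, apply Theorem~\ref{thm:Main} --- but your inventory of the directed cycles is off in a way that would spoil the tables.

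The gadget at a degree-$3$ vertex is not $K_{3,3}$ but $K_{3,3}$ minus the perfect matching of backtracking pairs, i.e.\ a hexagon with alternating sources and sinks; the paper's lemma describes $\LO H_{m,n,l}$ as two such hexagons joined by two directed paths of length $l-1$, with two $n$-wings on one hexagon and two $m$-wings on the other. The source/sink constraint at hexagon vertices forces any directed cycle to use exactly one hexagon edge at each hexagon vertex it meets, and tracing this out yields only three cycle lengths: $m$, $n$, and $m+n+2l$. There is \emph{no} grand cycle of length $2m+2n+2l$, and there is no ``case (ii)'' cycle using only one direction of the connecting path. Your description of the $(m+n+2l)$-cycle as ``both connecting-path directions plus one transition edge in each gadget'' is also wrong: such a cycle traverses one full orientation of $C_m$, crosses the path, traverses one full orientation of $C_n$, and crosses back. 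There are four of them (two orientation choices on each side), each disjoint from exactly one $m$-wing and one $n$-wing, and any two of them overlap on the $2l$ path vertices.

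With the corrected cycle list, the linear subgraphs are simply: a nonempty subset of the four mutually disjoint wings, or one $(m+n+2l)$-cycle together with a subset of its two complementary wings. Each exponent in the claimed polynomial is then realized by exactly one isomorphism type of $L$, so there are no cancellations to track; in particular $c_{2(m+n+l)} = 4\cdot(-1)^3 = -4$ comes from a big cycle plus both complementary wings, not from a Hamiltonian cycle.
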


\begin{lemma}
The oriented line graph $\LO H_{m,n,l}$ consists of two hexagons,
which we'll call the `top' and `bottom' hexagons, two directed vertical paths, 
and four wings. 
Proceeding around the top hexagon, one alternates
sink and source vertices. At antipodal vertices $v_1$ and $v_4$ there is a vertical path that, at the sink
vertex, goes down to the bottom hexagon and at the source vertex is the terminus
of a vertical path up from the bottom hexagon. The vertical paths have $l-1$ edges.
Each of the two edges of the top hexagon that are adjacent to neither $v_1$ nor $v_4$ 
is part of an $n$-wing. The situation on the bottom hexagon is similar except that it has
two $m$-wings.
\end{lemma}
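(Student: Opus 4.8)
The argument runs along the same lines as the proof of Lemma~\ref{lem:Gmn}, the new feature being that $H_{m,n,l}$ has two branch vertices instead of one. First I would fix notation: let $x$ be the vertex of $C_m$ where the connecting path attaches and $y$ the corresponding vertex of $C_n$; label the vertices of $C_m$ in cyclic order as $x = a_1,a_2,\dots,a_m$, those of $C_n$ as $y = b_1,b_2,\dots,b_n$, and the connecting path as $x = p_0,p_1,\dots,p_l = y$. Then $x$ and $y$ are the only vertices of degree $3$, while every other vertex has degree $2$. (When $l = 1$ the path is the single edge $xy$; the argument below still applies, and in that case the two hexagons share two vertices, which is exactly the $l-1 = 0$ instance of the statement.)

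The core observation, just as in Lemma~\ref{lem:Gmn}, concerns the local structure of $\LO H_{m,n,l}$ at a vertex $v$: the vertices of $\LO H_{m,n,l}$ lying over $v$ are the $2\deg(v)$ directed edges of $D(H_{m,n,l})$ incident to $v$, and the arcs of $\LO H_{m,n,l}$ that turn at $v$ go from an out-edge at $v$ to an in-edge at $v$, one for each such ordered pair except the inverse pairs. Hence this local piece is $K_{\deg(v),\deg(v)}$ with a perfect matching deleted, all of whose arcs point from the out-side to the in-side. For $\deg(v) = 2$ this is a pair of independent arcs --- a directed pass-through --- and for $\deg(v) = 3$ it is a hexagon, since $K_{3,3}$ minus a perfect matching is a $6$-cycle; because the two sides alternate around this $6$-cycle and every arc crosses from the out-side to the in-side, the three out-edges at $v$ are its sources and the three in-edges its sinks, alternating. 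A companion fact, read off the forced cyclic order of that hexagon, is that the two directed versions of any fixed $H_{m,n,l}$-edge at $v$ occupy antipodal vertices of the hexagon.

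With this in hand, the two degree-$3$ vertices $x$ and $y$ produce the two hexagons (I would take the one at $y$ as the top and the one at $x$ as the bottom, so the wing lengths agree with the statement), and every degree-$2$ vertex merely splices arcs together. It then remains to trace the three branches leaving each branch vertex. Each of the two directed traversals of $C_m$ based at $x$ is a directed walk in $H_{m,n,l}$ of length $m$ with all interior vertices of degree $2$, so it lifts to a directed path on $m$ vertices of $\LO H_{m,n,l}$ whose two endpoints lie in the hexagon at $x$ and, by the forced cyclic order, are adjacent there; the path together with that hexagon edge is therefore an $m$-cycle --- an $m$-wing --- and the forward and backward traversals produce two $m$-wings attached along an antipodal pair of edges of the hexagon at $x$. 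The same argument with $C_n$ gives two $n$-wings on the hexagon at $y$. Finally, the two directed traversals of the connecting path are directed walks of length $l$, each lifting to a directed path on $l$ vertices running from a vertex of one hexagon to a vertex of the other, i.e.\ a directed vertical path carrying $l-1$ arcs; by the companion fact above, the two $H_{m,n,l}$-edges at $y$ lying on the connecting path contribute an antipodal pair of hexagon vertices $v_1, v_4$ --- one a sink, at which the downward path begins, the other a source, at which the upward path terminates --- and the two hexagon edges meeting neither $v_1$ nor $v_4$ are exactly the two carrying the $n$-wings. The mirror statement holds at $x$, and assembling the pieces gives the claimed description.

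The step I expect to be the main obstacle is this last piece of bookkeeping: one must fix the cyclic order of each hexagon (equivalently, which pairs of the six directed edges at a branch vertex are adjacent in $K_{3,3}$ minus the matching) and then check simultaneously that the two traversals of a cycle close up along an antipodal pair of hexagon edges, that the two orientations of a bridge edge sit at an antipodal pair of hexagon vertices, and that these two antipodal pairs interleave so the wings land exactly on the edges missing $v_1$ and $v_4$ --- all while tracking arc directions so that the hexagons emerge as alternating cycles and the vertical paths as genuine directed paths from a sink of the top hexagon down to a source of the bottom. This is routine but must be done with care; as with the preceding lemmas, it is cleanest to carry out next to a drawing of a small case.
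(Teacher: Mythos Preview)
The paper does not actually prove this lemma --- it states ``We leave the proof of the lemma to the reader,'' noting only that the argument is similar to that for Lemma~\ref{lem:Gmn}. Your plan does exactly that: you isolate the two degree-$3$ vertices, recognize that the local picture of $\LO G$ at a degree-$d$ vertex is $K_{d,d}$ minus a perfect matching (hence a hexagon when $d=3$ and a pass-through when $d=2$), and then trace the three branches at each branch vertex to locate the wings and vertical paths. This is precisely the intended approach, and the bookkeeping you flag (antipodal placement of the two orientations of a given edge, and the resulting interleaving of wings and vertical-path endpoints) is the right thing to verify. One small caution: your identification of which side of the hexagon is source and which is sink is reversed relative to the paper's convention in Lemma~\ref{lem:Gmn}, where the directed edges \emph{terminating} at the branch vertex are the sources; this is likely because the edge condition in the paper's definition of $\LO G$ reads $t(e_j)=o(e_i)$ rather than the more standard $t(e_i)=o(e_j)$, and the lemmas are stated using the latter. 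The discrepancy is purely one of orientation and does not affect your structural argument, but you should align your source/sink labels with the paper's usage so the statement matches.
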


\begin{figure}[htb]
\centering
\includegraphics[scale = 0.5]{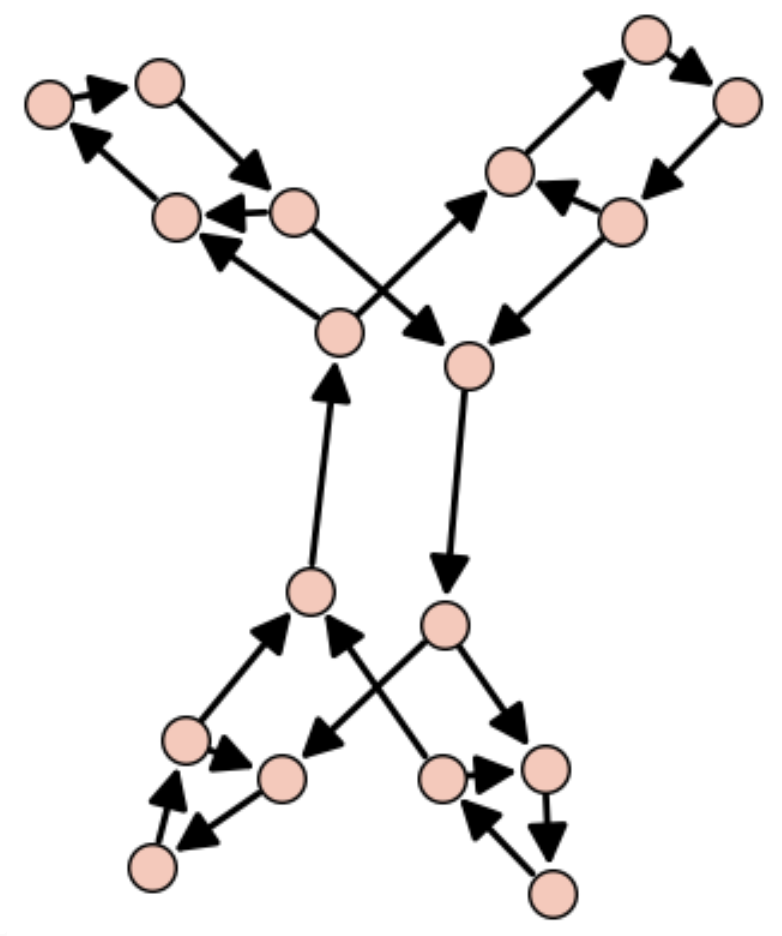}
\caption{The oriented line graph $\LO H_{4,3,2}$}
\label{fig:LOH432}
\end{figure}

For example, Figure~\ref{fig:LOH432} (constructed using Sage~\cite{S}) shows $\LO H_{4,3,2}$.
In the figure, the two hexagons have been folded into `V' shapes.
We leave the proof of the lemma to the reader.

\begin{proof} (of Theorem~\ref{thm:Hmnl})
As in the earlier proofs, cycles through a hexagon vertex must use exactly one edge of the hexagon 
with the other edge being part of the wing or the vertical path at that vertex.
This means there are three types of directed cycles, $m$ wings, $n$ wings, and a
$m+n+2l$-cycle that uses both of the vertical paths. The possible values
for $k$ are in $\{2(m+n+l), 2(m+n), m+2n + 2l, 2m+n + 2l, m+2n, 2m+n, m+n+2l, m+n, 2n, 2m, n,m\}$.

For $k = 2(m+n+l)$, the linear graph is an $m+n+2l$-cycle with an $m$ wing and an $n$ wing. This graph
occurs four times in $\LO H_{m,n,l}$, so $c_{2(m+n+l)} = (-1)^3 + (-1)^3 + (-1)^3 + (-1)^3 = -4$.

For $k = 2(m+n)$, we use both $m$-wings and both $n$-wings. There's only one occurrence, 
so $c_2(m+n) = (-1)^4 = 1$.

For $k = m+ 2n+ 2l$ (and similarly for $2m+n+2l$) $L$ consists of an $m+n-2l$ cycle and an $n$ wing.
There are four occurrences of $L$ so $c_{m+2n+2l} = (-1)^2 + (-1)^2 + (-1)^2 + (-1)^2$.

For $k = m+2n$ (similarly for $2m + n$) we use both $n$ wings and one of the $m$ wings. There are
two occurrences so $c_{m+2n} = (-1)^3 + (-1)^3 = -2$. 

For $k = m+n+2l$ we use just the one cycle, but there are four occurrences, so 
$c_{m+n+2l} = (-1)^1 + (-1)^1 + (-1)^1 + (-1)^1 = -4$

For $k = m+n$ we use one $m$-wing and one $n$-wing and there are four
occurrences, so $c_{m+n} = (-1)^2 + (-1)^2 + (-1)^2 + (-1)^2 = 4$.

For $k = 2n$ (or $2m$) we use both $n$ wings and there's only one occurrence, 
so $c_{2n} = (-1)^2 = 1$.
Finally, for $k = n$ (or $m$) we use a single $n$ wing and there are two occurrences,
so $c_n = (-1)^1 + (-1)^1 = -2$.
\end{proof}

%Possible cycles (Figure~\ref{fig:LOH432})
%\begin{itemize}
%    \item $m$ - $m$ wing (2 kinds)
%    \item $n$ - $n$ wing (2 kinds)
%    \item $2m$ - two adjacent wings (unique)
%    \item $2n$ - two adjacent wings (unique)
%    \item $m+n$ - two opposite wings (4 kinds)
%    \item $2m+n$ - two $m$ wings and one $n$ wing (2 kinds)
%    \item $m+2n$ - one $m$ wing and two $n$ wings (2 kinds)
%    \item $2m+2n$ - all four wings (unique)
%    \item $m+n+2l$ - `sides' navigating around one wing, through the center, to an opposite wing and back (4 kinds)
%    \item $2m+n+2l$ - one `side' with opposite $m$ wing (4 kinds)
%    \item $m+2n+2l$ - one `side' with opposite $n$ wing (4 kinds)
%    \item $2m+2n+2l$ - one `side' with both opposite wings (4 kinds)
%\end{itemize}

We next run through the possible multigraphs of rank two. It turns out these can all be identified
as specializations of the formulas in our three Theorems.

For $n \geq 3$, let $G_{1,n}$
denote an $n$-cycle together with a loop at one vertex and $G_{2,n}$ be an $n$-cycle
with a bigon attached at one vertex. Here, a {\em bigon} is a 2-cycle or, in other words, 
a graph with 2 vertices and a double edge between them.

\begin{theorem}
For $3 \leq n$,
$$\zeta_{G_{1,n}}(u)^{-1} = -3u^{2(1+n)} + 2 u^{1+2n} + 2u^{2+n} + u^{2n} + u^{2} -2u^n -2u + 1,$$
and
$$\zeta_{G_{2,n}}(u)^{-1} = -3u^{2(2+n)} + 2 u^{2+2n} + 2u^{4+n} + u^{2n} + u^{4} -2u^n -2u^2 + 1.$$
\end{theorem}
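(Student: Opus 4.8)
The plan is to recognize $G_{1,n}$ and $G_{2,n}$ as degenerate cases of the double cycle graph $G_{m,n}$ from Theorem~\ref{thm:Gmn}, and simply substitute the appropriate value of $m$. Concretely, a loop at a vertex is a $1$-cycle, so $G_{1,n}$ should be $G_{m,n}$ with $m=1$; a bigon attached at a vertex is a $2$-cycle, so $G_{2,n}$ should be $G_{m,n}$ with $m=2$. Substituting $m=1$ into the formula of Theorem~\ref{thm:Gmn} gives
\[
-3u^{2(1+n)} + 2u^{1+2n} + 2u^{2+n} + u^{2n} + u^{2} - 2u^{n} - 2u + 1,
\]
which is exactly the claimed expression for $\zeta_{G_{1,n}}(u)^{-1}$, and substituting $m=2$ yields the stated expression for $\zeta_{G_{2,n}}(u)^{-1}$. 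So the entire content of the theorem is that the oriented-line-graph analysis in the proof of Theorem~\ref{thm:Gmn} remains valid, word for word, when one of the cycles is a loop or a bigon.

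The key steps, in order, are as follows. First I would verify that the structural description in Lemma~\ref{lem:Gmn} still holds for $m\in\{1,2\}$: the eight directed edges of the symmetric digraph incident on the common vertex $x$ still form the oriented cube (this part of the argument never used $m,n>2$), and the two directed traversals of the short cycle $C_m$ still generate two ``wings'' — now wings of length $1$ (for a loop) or length $2$ (for a bigon). One must check that the loop and bigon do not introduce extra edges in $D(G)$ beyond those accounted for, and that the condition $\bar e_i \neq e_j$ in the definition of $\LO G$ behaves correctly; for a loop, the loop edge and its reverse are the two opposite orientations of the loop, so nothing degenerates. Second, I would re-run the linear-subgraph census from the proof of Theorem~\ref{thm:Gmn}: the enumeration of cycle types (a wing, an $(m+n)$-cycle using top and bottom edges of a cube face, or a $2(m+n)$-cycle using all four wings) and the occurrence counts in Tables~\ref{tab:2m2nCase}--\ref{tab:m2nCase} depend only on the combinatorics of the cube-plus-four-wings picture, not on the wing lengths being large, so they carry over verbatim. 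Third, I would assemble $c_k$ for each $k\in\{2(1+n), 1+2n, 2+n, 2n, 2, n, 1\}$ (and the analogous list for $m=2$) exactly as before, obtaining the stated polynomials; alternatively, and more cleanly, simply cite Theorem~\ref{thm:Gmn} and substitute.

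The main obstacle — really the only thing to be careful about — is the potential for exponent collisions when $m$ is small. For generic $m\le n$ with $m\ge 3$ the seven monomials $u^{2(m+n)}, u^{m+2n}, u^{2m+n}, u^{2n}, u^{2m}, u^{n}, u^{m}$ have distinct exponents, but when $m=1$ or $m=2$ one should check whether any two of these exponents coincide (for instance, for $m=1$ the exponents are $2n+2,\,2n+1,\,n+2,\,2n,\,2,\,n,\,1$, which are distinct for all $n\ge 3$; for $m=2$ they are $2n+4,\,2n+2,\,n+4,\,2n,\,4,\,n,\,2$, also distinct for $n\ge 3$), so no terms merge and the formula is stated in fully reduced form. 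A secondary, more conceptual point to address is that Definition~\ref{def:3D} and the oriented-line-graph machinery were set up for simple graphs, while $G_{1,n}$ and $G_{2,n}$ are multigraphs; but both are still connected with no degree-$1$ vertex, the determinant formula and Kotani--Sunada formula apply, and the oriented line graph construction extends to multigraphs in the obvious way (each edge orientation is a vertex of $\LO G$), so this is a matter of noting that the earlier arguments did not secretly use simplicity. Once these checks are in place, the proof is a one-line substitution.
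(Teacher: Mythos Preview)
Your approach matches the paper's: it omits the proof, saying only that it is ``similar to the proof of Theorem~\ref{thm:Gmn},'' and then remarks that the formulas agree with setting $m=1,2$ in that theorem --- exactly the substitution-plus-verification strategy you outline. One small slip: your claim that for $m=2$ the exponents $2n+4,\,2n+2,\,n+4,\,2n,\,4,\,n,\,2$ are distinct for all $n\ge 3$ fails at $n=4$ (where $2n=n+4=8$ and $n=4$), so for $G_{2,4}$ some terms do merge; this does not affect the correctness of the stated identity, only your side remark that the expression is always in fully reduced form.
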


We omit the proof which is similar to the proof of Theorem~\ref{thm:Gmn}.

\begin{remark} These formulas agree with setting $m=1,2$ in Theorem~\ref{thm:Gmn}. \end{remark}

Similarly, the rank two graphs that have only loops and multiedges agree with the 
expected specialization of the formula for $G_{m,n}$. Specifically, let $BQ_2$ denote
a bouquet of two loops, $BL$ be a bigon with a loop and $BB$ be two bigons joined
at a single vertex. Then 
\begin{align*}
\zeta_{BQ_2}(u)^{-1} &= \zeta_{G_{1,1}}(u)^{-1} = 
-3u^4 + 4u^3 + 2u^2 - 4u + 1 \\
\zeta_{BL}(u)^{-1} &= \zeta_{G_{1,2}}(u)^{-1} = 
-3u^6 + 2u^5 + 3u^4 - u^2 - 2u + 1 \\
\zeta_{BB}(u)^{-1} &= \zeta_{G_{2,2}}(u)^{-1} = 
-3u^8 + 4u^6 + 2u^4 - 4u^2 + 1
\end{align*}

\begin{remark} These three graphs have three or fewer vertices and we can also calculate the
Ihara zeta function using the methods of Section~\ref{sec:ord}. 
\end{remark}

Let $G(2,n,1)$ denote a graph that is a $C_m$ with one doubled edge. The zeta function is a specialization
of that for $G(m,n,p)$:

\begin{align*}
    \zeta_{G_{2,n,1}}(u)^{-1} =& -4u^{2n+2} + 4u^{2n}+4u^{n+2} + u^{4} -4u^n-2u^2+1.
\end{align*}

In particular, for the theta graph $G_{2,2,1}$ we have

\begin{align*}
    \zeta_{G_{2,2,1}}(u)^{-1} =& -4u^{6} + 9u^{4} -6u^2+1.
\end{align*}
Again, we can also calculate the zeta function for the theta graph using the techniques of 
Section~\ref{sec:ord}.

Let $H_{2,n,l}$ denote a Handcuff graph where the cycle $C_n$ is joined
to a bigon by a path of $l$ edges. The polynomial for $H_{2,n,l}$ is a special 
case of that for $H_{m,n,l}$ by substituting $m = 2$.

\begin{align*}
 \zeta_{H_{2,n,l}}(u)^{-1} =& -4u^{2(2+n+l)}+u^{2(2+n)}+4u^{2+2n+2l} + 4u^{4+n+2l}-2u^{4+n}-2u^{2+2n} \\
 & \hspace{0.5 in} -4u^{2+n+2l}+u^{2n} + 4u^{2+n} + u^{4} - 2u^n-2u^2+1
 \end{align*}
 
Let $H_{1,n,l}$ denote a Handcuff graph where the cycle $C_n$ is connected to a path of $l$ edges
with a loop at its other end. The polynomial for $H_{1,n,l}$ is a special 
case of that for $H_{m,n,l}$, by substituting $m = 1$.

\begin{align*}
 \zeta_{H_{1,n,l}}(u)^{-1} =& -4u^{2(1+n+l)}+u^{2(1+n)}+4u^{1+2n+2l} + 4u^{2+n+2l}-2u^{2+n}-2u^{1+2n} \\
 & \hspace{0.5 in} -4u^{1+n+2l}+u^{2n} + 4u^{1+n} + u^{2} - 2u^n-2u+1
 \end{align*}
 
 There remain three related types of graph where we have a path of length $l$ with a loop or a bigon 
 at both ends. Let $H_{2,2,l}$ denote two bigons joined by a path of length $l$. Again, the polynomial
 specializes that for $H_{m,n,l}$.

\begin{align*}
 \zeta_{H_{2,2,l}}(u)^{-1} =& -4u^{2(4+l)}+u^{8}+ 8u^{6+2l}-4u^{4+2l} - 4u^{6} + 6u^{4} - 4u^2 +1
\end{align*}
 
 If we have a bigon and a loop joined by a path of length $l$, we'll write $H_{1,2,l}$. As usual, 
 the polynomial specializes $H_{m,n,l}$.
 
\begin{align*}
 \zeta_{H_{1,2,l}}(u)^{-1} =& -4u^{2(3+l)}+u^{6}+4u^{4+2l} + 4u^{5+2l}-2u^{5}-u^{4} -4u^{3+2l} + 4u^{3} - u^2-2u+1
 \end{align*}
 
 Finally, if $H_{1,1,l}$ denotes two loops joined by a path of length $l$, we again
 specialize $H_{m,n,l}$.
 
 \begin{align*}
 \zeta_{H_{1,1,l}}(u)^{-1} =& -4u^{2(2+l)}+u^{4}+8u^{3+2l}-4u^{3} -4u^{2+2l} + 6u^{2} -4u+1
 \end{align*}
 
Notice that the bipartite rank two graphs have even zeta polynomial, in agreement with Theorem~\ref{thm:even}. 
For example, a double cycle graph $G_{m,n}$ is bipartite exactly if $m$ and $n$ are both even, and, in that case, 
the polynomial of Theorem~\ref{thm:Gmn} is even. Similarly, a $G_{m,n,p}$ or $H_{m,n,l}$ graph
is bipartite exactly when $m$ and $n$ are both even and, in that case, the zeta polynomial is even.

\section{Rank two graphs with equal zeta function}
\label{sec:IZr2}

In this section we prove Theorem~\ref{thm:IZr2}: if $\Gamma_1$ and $\Gamma_2$ are
rank two graphs that share the same Ihara zeta function, 
then $\Gamma_1$ and $\Gamma_2$ are isomorphic.

\begin{proof}
We begin by summarizing some aspects of the Ihara zeta function that can be determined
directly from the graph.

Kotani and Sunada~\cite{KS} showed that the leading coefficient of $\zeta_{G}(u)^{-1}$ is
$$c_{2|E|} = (-1)^{|E|-|V|} \prod_{v_i \in V}(d(v_i) - 1).$$ We have seen that aside from vertices of 
degree 2 (which do not contribute to the product), a rank two graph is either a $G_{m,n}$ graph
with a degree 4 vertex and $c_{2|E|} = -3$ or else a
$H_{m,n,l}$ handcuff graph or a $G_{m,n,p}$ graph, with two degree 3 vertices and $c_{2|E|} = -4$.
Hence if two rank two graphs produce the same zeta function, they must have the same size
$|E|$ and both have the same leading coefficient $-3$ or $-4$.

As discussed by Scott and Storm~\cite{SS} (see Theorem 5), for a simple graph, the first non-zero 
coefficient after $c_0=1$, is $c_g$, where $g$ is the graph's girth.  
Recall that the {\em girth} of a graph is the size of its shortest cycle.
We can generalize the definition of girth to multigraphs by saying
that a graph with a loop has girth one. If a multigraph has no loops,
but does have bigons, we'll say the girth is two. Corollary 14 of
\cite{SS} shows that two multigraphs with the same Ihara zeta function
must have the same girth. This implies that a simple graph cannot share
its Ihara zeta function with a graph that has loops or bigons.

A final way that we can go between a graph and its zeta function is through
the count of the number of spanning trees, denoted $\kappa_G$.
As in Theorem~\ref{thm:ASTF} below, for a rank two graph,
$$ \kappa_G = - \frac{1}{8} \frac{d^2}{du^2} \zeta_G(u)^{-1}\bigg\rvert_{u = 1}.$$
If two rank two graphs have the same zeta function, they must also have the
same number of spanning trees.

We begin with the case where two graphs have the same leading 
coefficient, $-3$.
Suppose $\Gamma_1$ and $\Gamma_2$ of rank two share the 
same Ihara zeta function, both having leading coefficient $-3$
for $\zeta_{\Gamma_i}(u)^{-1}$.
If both are simple, then they are both $G_{m,n}$ graphs. 
As $\Gamma_1$ and $\Gamma_2$ have
the same order and girth, they must be isomorphic. 

The non-simple 
rank two graphs with leading coefficient $-3$ are graphs of 
the form $G_{2,n}$ or $G_{1,n}$. 
Suppose $\Gamma_1$ is not simple and has an Ihara zeta function so that
$\zeta_{\Gamma_1}(u)^{-1}$ has leading coefficient $-3$. 
If $\Gamma_2$ has the same zeta function, then, since they must have the same girth, $\Gamma_2$ is also not simple.

If $\Gamma_1$ has girth one, it has a loop and $\Gamma_2$ must have a loop too. Since
they have the same size, $\Gamma_1$ and $\Gamma_2$ are isomorphic. Similarly, if
$\Gamma_1$ has a bigon and no loop, then $\Gamma_2$ must also have a bigon and no loop. 
Having the same size, $\Gamma_1$ and $\Gamma_2$ are again isomorphic in this case. 
In summary, if $\Gamma_1$ and $\Gamma_2$ both of rank two and have the same Ihara zeta function
with leading coefficient $-3$, then $\Gamma_1$ and $\Gamma_2$ are isomorphic.

Next suppose $\Gamma_1$ is a simple graph of rank two so that the leading 
coefficient of $\zeta_{\Gamma_1}(u)^{-1}$ is $-4$.
Then $G$ is a $G_{m,n,p}$ (with $p > 0$)
or a $H_{m,n,l}$ (with $l > 0$). Suppose $\Gamma_1$ is a $G_{m,n,p}$. 

Further, we may assume $0 < 2p \leq m \leq n$ so that $\Gamma_1$ has girth $m$.
Indeed, we can think of $G_{m,n,p}$ as a theta graph with distinct vertices $v$ and $w$ 
joined by three paths, of $m-p$, $n-p$, and $p$ edges, respectively. Without loss of 
generality, we can order these paths so that $n-p \geq m-p \geq p$, which gives the
required relationship $2p \leq m \leq n$.

If $\Gamma_2 = H_{m',n',l}$ with $m' \leq n'$ and $l > 0$
has the same Ihara zeta function as $\Gamma_1$,
then $\Gamma_2$ has girth $m'$, so $m' = m$. Since they
have the same size, we conclude $n' + l = n-p$ or
$n = n' + l + p$. Counting spanning trees (see Section~\ref{sec:kappa}) we
have $mn' = mn-p^2$, whence 
$p^2 = m(l+p)$. Since $l> 0$ and $m \geq 2p$ this is a contradiction.

Suppose instead that $\Gamma_2 = G_{m',n',p'}$, with 
$0 < 2p' \leq m' \leq n'$, has the same Ihara zeta function as $\Gamma_1$.
Since $\Gamma_1$ and $\Gamma_2$ have the same girth, $m' = m$. 
They must also have the same number of edges, so $m+n-p = m'+n'-p'$, and $n-p = n'-p'$. 

Let's examine the zeta function of $\Gamma_1$.
\begin{align*}
    \zeta_{\Gamma_1}(u)^{-1} =& -4u^{2m+2n-2p} + u^{2m+2n-4p}+2u^{m+2n-2p}
+2u^{2m+n-2p} \\
& \mbox{ } + u^{2n} + u^{2m} + 2u^{m+n} -2u^{m+n-2p} -2u^n-2u^m+1.
\end{align*}
Notice that we can pick out, not only the girth, which corresponds to the 
term $-2u^m$, but also the two next smallest cycle lengths, $n$ and $m+n-2p$,
identified by the terms $-2u^n$ and $-2u^{m+n-2p}$.
However, these terms can combine. For example, if $n = m$, then instead of $-2u^n$ and
$-2u^m$ we'll have the single term $-4u^n$. Since we're assuming $2p \leq m \leq n$, 
it follows that $m+n-2p \geq n \geq m$.

Suppose that $n = m$. Then the term corresponding to the girth $m$ is either $-4u^m$ or 
$-6u^m$ (in case $m+n-2p = m$ as well). In either case, $\zeta_{\Gamma_2}^{-1}$ has
the same penultimate term, so we conclude $n' = n$, whence $p' = p$. In this case
$\Gamma_1$ and $\Gamma_2$ are isomorphic graphs. 

On the other hand, if $n \neq m$, that is, if $n > m$, then $m+n-2p \geq n > m$ as well
and the penultimate term of $\zeta_{\Gamma_1}(u)^{-1}$ is just $-2u^m$.  
The term before that is either $-2u^n$ or else $-4u^n$ in case $m+n-2p = n$. In
either case, $\zeta_{\Gamma_2}^{-1}$ ends with the same terms, 
so we conclude $n' = n$, whence $p' = p$. In this case
$\Gamma_1$ and $\Gamma_2$ are isomorphic graphs. 

Next, suppose both $\Gamma_1$ and $\Gamma_2$ are handcuff graphs. Say $\Gamma_1 = H_{m,n,l}$
with $m \leq n$ and $l > 0$ and $\Gamma_2 = H_{m',n',l'}$ with $m' \leq n'$ and $l>0$. Since
they have the same girth, $m = m'$. Since they have the same number of spanning trees 
$mn = m'n$ (see Section~\ref{sec:kappa}), whence $n' = n$. Finally, we can deduce $l' = l$ as
the two graphs have the same number of edges. Thus, $\Gamma_1$ and $\Gamma_2$ are
isomorphic in this case as well.

Next, suppose $\Gamma_1$ is a multigraph of rank two whose zeta polynomial begins
with $-4u^{2|E|}$. As mentioned earlier in the proof, we can generalize the notion of girth
to multigraphs. Suppose $\Gamma_1$ has a loop and girth one. If $\Gamma_2$ of 
rank two has the same zeta function, it too has girth one and a loop. 
The only multigraphs of rank two with two degree 3 vertices and a loop are $H_{1,n,l}$ 
graphs. Suppose $\Gamma_1 = H_{1,n,l}$ with $n \geq 1$ and $l > 0$ and 
$\Gamma_2 = H_{1,,n'.l'}$ with $n' \geq 1$ and $l'>0$. Since they must have the 
same number of spanning trees (see Section~\ref{sec:kappa}), $n'=n$. As they 
have the same number of edges, $1+n+l = 1+n+l'$, so $l' = l$ as well and 
$\Gamma_1$ and $\Gamma_2$ are isomorphic.

Finally suppose $\Gamma_1$ is a multigraph of girth two, that is, it has a bigon
but no loop. If $\Gamma_2$ has the same zeta function, it too has a bigon and no loop.
So, $\Gamma_1$ and $\Gamma_2$ are either $G_{2,n,1}$ or $H_{2,n,l}$. 
Since $\zeta_{G_{2,n,1}}(u)^{-1}$ terminates with $u^4 -4u^n - 2u^2 + 1$ and
$\zeta_{H_{2,n,l}}(u)^{-1}$ with $u^4 -2 u^n - 2u^2+1$, there's no way that 
a $G_{2,n,1}$ could have the same zeta function as a $H_{2,n,l}$. 

If $\Gamma_1 = G_{2,n,1}$ and $\Gamma_2 = G_{2,n',1}$ have the same
zeta function, then the have the same number of edges, so $n+1 = n'+1$,
whence $n' = n$ and $\Gamma_1$ and $\Gamma_2$ are isomorphic.
If $\Gamma_1 = H_{2,n,l}$ and $\Gamma_2 = H_{2,n',l'}$ have the same
zeta function, then they have the same number of edges, so $2+n+l = 2+n'+l'$.
They also have the same number of trees (see Section~\ref{sec:kappa}) so 
that $2n = 2n'$. This implies that $\Gamma_1$ and $\Gamma_2$ are 
isomorphic.
\end{proof}

\section{Matrix determinant formulas}
\label{sec:Mdet}

In this section, we review some basic lemmas for calculating matrix determinants. We assume these are standard and leave most proofs to the reader.

\begin{lemma}
\label{lem:MkA}
Let $A$ be an $n\times n$ matrix, and $k$ be a scalar. Then
$$det(kA) = k^n\det(A).$$
\end{lemma}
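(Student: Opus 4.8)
The plan is to reduce the claim to the elementary fact that scaling a single row of a matrix scales its determinant by the same factor, and then iterate. First I would observe that $kA$ is obtained from $A$ by multiplying \emph{every} entry by $k$, which is the same as multiplying each of the $n$ rows of $A$ by $k$. Since the determinant is a multilinear function of the rows, multiplying one row by $k$ multiplies $\det$ by $k$; applying this to all $n$ rows in succession yields the factor $k^n$. This is the shortest route and relies only on the row-multilinearity of $\det$, which I would treat as known.

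Alternatively, and perhaps cleaner to state without invoking multilinearity as a black box, I would argue directly from the Leibniz expansion
\[
\det(kA) = \sum_{\sigma \in S_n} \operatorname{sgn}(\sigma) \prod_{i=1}^n (kA)_{i,\sigma(i)}
= \sum_{\sigma \in S_n} \operatorname{sgn}(\sigma) \prod_{i=1}^n \bigl(k\, a_{i,\sigma(i)}\bigr).
\]
In each term of the outer sum the scalar $k$ appears exactly $n$ times in the product (once for each row index $i$), so it factors out as $k^n$, leaving
\[
\det(kA) = k^n \sum_{\sigma \in S_n} \operatorname{sgn}(\sigma) \prod_{i=1}^n a_{i,\sigma(i)} = k^n \det(A).
\]

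There is essentially no obstacle here: the only thing to be careful about is that the exponent is the size $n$ of the matrix, not, say, $n^2$ — a point the Leibniz-formula argument makes transparent, since the determinant is a sum of products of exactly $n$ entries, one from each row. I would present the Leibniz version as the one-line proof and, as the paper suggests for the standard lemmas of this section, leave it at that.
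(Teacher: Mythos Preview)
Your proof is correct; both the row-multilinearity argument and the Leibniz-formula argument are standard and complete. The paper itself gives no proof of this lemma at all---it explicitly states that these determinant lemmas are standard and leaves most proofs to the reader---so your write-up in fact supplies more than the paper does.
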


\begin{definition} Let $J_{m,n}$ denote the $m\times n$ matrix of all ones, $J_n$ denote the $n\times n$ matrix of all ones, and $\vec{1} = J_{n,1}$.
\end{definition}

\begin{lemma}
If $n\geq 2$, then $\det(J_n) = 0$. Otherwise, if $n=1$ then $\det(J_n)=1$.
\end{lemma}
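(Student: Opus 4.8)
The plan is to exploit the fact that $J_n$ has rank at most one: every row of $J_n$ equals the all-ones row vector. The case $n=1$ is immediate, since $J_1$ is the $1\times 1$ matrix $[1]$, and so $\det(J_1)=1$ directly from the definition of the determinant. For $n\geq 2$ there are at least two identical rows, so the rows of $J_n$ are linearly dependent and $J_n$ is singular; hence $\det(J_n)=0$.

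To make the $n\geq 2$ case fully explicit, I would perform elementary row operations. Subtracting the first row of $J_n$ from each of the remaining $n-1$ rows produces a matrix whose first row is $\vec{1}^{\,T}$ and whose other $n-1$ rows are identically zero. Row operations of this type leave the determinant unchanged, and a matrix with a zero row has determinant $0$, so $\det(J_n)=0$. Equivalently, one may observe that $\vec{1}$ is an eigenvector of $J_n$ with eigenvalue $n$, while the orthogonal complement of $\vec{1}$ in $\R^n$ is an $(n-1)$-dimensional eigenspace with eigenvalue $0$, so $\det(J_n)=n\cdot 0^{\,n-1}=0$ as soon as $n\geq 2$.

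There is no genuine obstacle here. The only point requiring a moment's care is keeping the degenerate case $n=1$ separate: there the "remaining rows" invoked in the row-reduction argument do not exist, and the claim reduces to the trivial evaluation $\det([1])=1$.
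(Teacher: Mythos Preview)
Your argument is correct and entirely standard: the $n=1$ case is immediate, and for $n\geq 2$ the identical rows (or the explicit row reduction you describe) show $J_n$ is singular. The paper itself does not supply a proof of this lemma, deferring it to the reader as a standard fact, so there is nothing further to compare.
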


\begin{lemma}
\label{lem:MSylv}
For $n\times n$ matrices, the diagonal entries all equal the constant $a+b$ and all other entries are $b$, we have the following determinant formula:
$$ 
\det{\left(\begin{array}{cccccc}
a+b & b & b & b & \dots & b\\
b & a+b & b & b &  \dots & b\\
b & b & a+b & b &  \dots & b\\
b & b & b & a+b &  \dots & b\\
 &  \vdots &   &  & \ddots \\
b & b & b & b & \dots & a+b \\
\end{array}\right)}
= a^{n-1}(a+bn)
$$
In other words, $\det(aI_n+bJ_n) = a^{n-1}(a+bn)$.
\end{lemma}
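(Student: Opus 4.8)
The plan is to reduce the matrix $aI_n + bJ_n$ to triangular form by elementary row and column operations, none of which changes the determinant. First I would subtract the first row from each of the remaining $n-1$ rows. The first row is unchanged, namely $(a+b, b, b, \ldots, b)$, while row $i$ (for $2 \le i \le n$) becomes $(-a, 0, \ldots, 0, a, 0, \ldots, 0)$ with the entry $a$ sitting in column $i$. Next I would add each of columns $2, \ldots, n$ to column $1$. In the first row the new $(1,1)$ entry is $(a+b) + (n-1)b = a + nb$; in each row $i \ge 2$ the entry in column $1$ becomes $-a + a = 0$, and no other entries change. The resulting matrix is upper triangular with diagonal entries $a+nb, a, a, \ldots, a$, so its determinant, hence $\det(aI_n + bJ_n)$, equals $a^{n-1}(a+nb)$.

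An alternative I could present is a spectral argument: $J_n = \vec{1}\,\vec{1}^{\,T}$ has rank one, with the single nonzero eigenvalue $n$ (eigenvector $\vec{1}$) and eigenvalue $0$ of multiplicity $n-1$, so $aI_n + bJ_n$ has eigenvalue $a+bn$ once and $a$ with multiplicity $n-1$, and the determinant is the product of the eigenvalues. A third route is the rank-one update formula $\det(A + uv^{T}) = \det(A)\,(1 + v^{T} A^{-1} u)$ applied with $A = aI_n$, $u = b\vec{1}$, $v = \vec{1}$, which gives $a^{n}\bigl(1 + \tfrac{bn}{a}\bigr) = a^{n-1}(a+bn)$.

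There is no serious obstacle here. The only points requiring a moment's care are, in the first approach, applying the column operation \emph{after} the row operation (so that the off-diagonal structure is already sparse and the triangularization is immediate), and, if one instead uses the rank-one update formula, handling the degenerate case $a = 0$ separately — this follows either from $\det(bJ_n) = 0$ for $n \ge 2$ by the preceding lemma, or by noting that both sides are polynomials in $a$ that agree for all $a \neq 0$.
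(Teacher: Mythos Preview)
Your proposal is correct. Your primary argument, the row-and-column reduction to triangular form, is a genuinely different route from the paper's, which instead invokes Sylvester's Determinant Identity $\det(I_n + uv^T) = 1 + v^T u$ applied to $aI_n + b\vec{1}\,\vec{1}^{\,T}$ after factoring out $a^n$, with the degenerate cases $n=1$ and $a=0$ handled separately via the preceding lemma on $\det(J_n)$. In other words, the paper's proof is precisely your ``third route.'' Your row-reduction argument has the virtue of being entirely self-contained and of working uniformly without any case split on $a$ or $n$, whereas the paper's approach is slightly quicker once one is willing to quote the rank-one update formula but must treat $a=0$ by a side argument. Your spectral alternative is also valid and perhaps the most conceptual of the three.
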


\begin{proof}
If $n=1$, then $\det(aI_n+bJ_n) = a+b$, which agrees with $a^{1-1}(a+b(1)) = a+b$. Assume $n\geq 2$ henceforth. If $a=0$, then $\det(aI_n+bJ_n) = \det(bJ_n) = b^n\det(J_n)=0$ by the previous lemma because $n\geq 2$. This agrees with our formula since $(0)^{n-1}(0+n)=0$. Otherwise, if $a\neq 0$, this matrix can be written as:
$$aI_n + b\vec{1} \vec{1}^T$$
Recall Sylvester's Determinant Identity:
$$\det(I_n + u v^T) = 1 + v^T u$$
Therefore, using the Lemma~\ref{lem:MkA}:
\begin{align*}
\det\left(aI_n + b\vec{1} \vec{1}^{T}\right) &= a^n \det\left( I_n + \dfrac{b\vec{1} \vec{1}^{T}}{a}\right) \\
&= a^n \left(1 + \dfrac{b\vec{1}^{T} \vec{1}}{a}\right) \\
&= a^{n-1}(a+bn)
\end{align*}
\end{proof}

\clearpage
\begin{lemma}
\label{lem:Mblock}
For the block matrix
\[
\Gamma = \begin{bmatrix} A & B \\ C & D \end{bmatrix}
\]
with $A$ an invertible $m\times m$ matrix, $B$ an $m\times n$ matrix, $C$ an $n\times m$ matrix, and $D$ an $n\times n$ matrix,
$$\det(\Gamma) = \det(A)\det(D-CA^{-1}B).$$
\end{lemma}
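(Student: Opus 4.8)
The plan is to prove the Schur complement formula by exhibiting an explicit block factorization of $\Gamma$ and then using multiplicativity of the determinant together with the fact that the determinant of a block-triangular matrix is the product of the determinants of the diagonal blocks. Concretely, I would write
\[
\begin{bmatrix} A & B \\ C & D \end{bmatrix}
= \begin{bmatrix} I_m & 0 \\ C A^{-1} & I_n \end{bmatrix}
\begin{bmatrix} A & B \\ 0 & D - C A^{-1} B \end{bmatrix},
\]
and then verify the identity by carrying out the block multiplication on the right-hand side: the $(1,1)$ block is $A$, the $(1,2)$ block is $B$, the $(2,1)$ block is $C A^{-1} A = C$, and the $(2,2)$ block is $C A^{-1} B + (D - C A^{-1} B) = D$. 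This is where invertibility of $A$ is used, so the factorization makes sense.

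Next I would take determinants of both sides. By multiplicativity, $\det(\Gamma)$ equals the product of the determinants of the two block-triangular factors. For a block lower-triangular matrix with identity diagonal blocks, the determinant is $1$; for a block upper-triangular matrix the determinant is the product of the determinants of the diagonal blocks, namely $\det(A)\det(D - C A^{-1} B)$. Combining these gives exactly $\det(\Gamma) = \det(A)\det(D - C A^{-1} B)$.

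The only point that is not purely formal is justifying that the determinant of a block-triangular matrix is the product of the diagonal blocks' determinants; since the paper says we may leave standard facts to the reader, I would either cite it as standard or give the one-line reduction (e.g. via the Leibniz expansion, or by a further factorization into elementary block operations). The main obstacle, such as it is, is simply being careful that $A$ being invertible is genuinely needed and stating that hypothesis up front; there is no deeper difficulty, and no subtle computation beyond the block multiplication check above.
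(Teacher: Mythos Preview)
Your proposal is correct and is the standard Schur complement argument. The paper does not actually supply a proof of this lemma; it states at the start of the section that these matrix determinant facts are standard and leaves most proofs to the reader, so there is nothing to compare against and your block $LU$ factorization is exactly the kind of justification one would expect.
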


\begin{lemma}
\label{lem:Mblock2}
If $A=D$, $B=C$, and $A$ and $B$ are square matrices,
$$\det(\Gamma) = \det(A-B)\det(A+B)$$
\end{lemma}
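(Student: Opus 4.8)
The plan is to derive this as a direct corollary of Lemma~\ref{lem:Mblock}, after first reducing to the case where $A$ is invertible (which is the subtle point, since Lemma~\ref{lem:Mblock} requires invertibility of the top-left block). First I would handle the generic case: assuming $A$ invertible, apply Lemma~\ref{lem:Mblock} with $B = C$ and $D = A$ to get $\det(\Gamma) = \det(A)\det(A - B A^{-1} B)$. The key algebraic step is to factor $A - B A^{-1} B = A^{-1}(A^2 - B A^{-1} B A) $ — but this is awkward because $B$ and $A$ need not commute. A cleaner route is to write $\det(A)\det(A - BA^{-1}B) = \det\bigl(A(A-BA^{-1}B)\bigr) = \det(A^2 - ABA^{-1}B)$, which still has the stray $A^{-1}$.

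So instead I would use the standard block row/column operations directly rather than invoking Lemma~\ref{lem:Mblock} verbatim. Adding the bottom block-row to the top block-row turns $\Gamma = \begin{bmatrix} A & B \\ B & A \end{bmatrix}$ into $\begin{bmatrix} A+B & A+B \\ B & A \end{bmatrix}$; this is a determinant-preserving operation. Then subtracting the first block-column from the second block-column yields $\begin{bmatrix} A+B & 0 \\ B & A - B \end{bmatrix}$, again determinant-preserving. This is block lower-triangular, so its determinant is $\det(A+B)\det(A-B)$. This argument uses only elementary block operations and never needs $A$ or $B$ invertible, so it handles all cases uniformly and avoids the reduction issue entirely.

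The main obstacle I anticipate is simply being careful that the block row and column operations are legitimate (i.e., that $\det$ is unchanged), since these are the block analogues of the usual elementary operations; one should note that adding a multiple of one block-row to another corresponds to left-multiplication by a block matrix of the form $\begin{bmatrix} I & I \\ 0 & I \end{bmatrix}$, which has determinant $1$, and similarly for the column operation. Once that is granted the computation is immediate. I would present the two operations explicitly as left- and right-multiplication by unipotent block matrices to make the determinant bookkeeping transparent, then read off the block-triangular determinant.
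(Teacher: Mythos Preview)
Your block row/column operation argument is correct and standard: left-multiplication by $\begin{bmatrix} I & I \\ 0 & I \end{bmatrix}$ followed by right-multiplication by $\begin{bmatrix} I & -I \\ 0 & I \end{bmatrix}$ reduces $\Gamma$ to the block lower-triangular matrix $\begin{bmatrix} A+B & 0 \\ B & A-B \end{bmatrix}$, and both unipotent block matrices have determinant $1$, so the conclusion follows. Note that the paper does not actually supply a proof of this lemma---it is listed among the ``standard'' results whose proofs are left to the reader---so there is no alternative argument to compare against; your approach is exactly the kind of elementary verification the reader would be expected to supply, and your observation that it avoids any invertibility hypothesis (unlike a na\"ive appeal to Lemma~\ref{lem:Mblock}) is a genuine advantage.
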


% \begin{lemma} 
% \label{lem:JaIinv}
% For $a \neq 0,-n$, 
% the inverse of $J_n+aI_n$ is
% $$(J_n+aI_n)^{-1}=-\frac{1}{a(a+n)}(J_n-(a+n)I_n).$$
% \end{lemma}

% \begin{proof}
% We observe that multiplying these two matrices gives the identity matrix:
% \begin{align*}
% (J_n+aI_n)\left(-\frac{1}{a(a+n)}(J_n-(a+n)I_n)\right) &= -\frac{1}{a(a+n)}(J_n+aI_n)\left(J_n-(a+n)I_n\right)\\
% &= -\frac{1}{a(a+n)}(J_nJ_n-(a+n)J_n+aJ_n-a(a+n)I_n) \\
% &= -\frac{1}{a(a+n)}(nJ_n-nJ_n-a(a+n)I_n) \\
% &= -\frac{1}{a(a+n)}(-a(a+n)I_n) \\
% &= I_n
% \end{align*}
% \end{proof}

\section{Ihara zeta function of five graph families}
\label{sec:var}

In this section we determine the zeta function for five families of graphs: the complete graphs, complete bipartite graphs, 
the cocktail party graphs, the $B_n$ graphs, and the M\"obius ladders. 
For $n$ even, $B_n$ is the bipartite graph obtained by deleting a perfect matching from $K_{n/2,n/2}$.

We remark that it is well known (for example, see \cite[Example 2.3]{T}) that the Ihara zeta function of 
$C_n$, the cycle graph on $n$ vertices, is 
$$\zeta_{C_n}(u)^{-1}= (1-u^n)^2.$$

The formulas in this section have simple generalizations to the case where we add an equal number of loops
at each vertex. 
For example, we state this generalization explicitly after our first theorem about complete graphs.

\begin{theorem} Let $n \geq 3$.
    The Ihara zeta function of the complete graph $K_n$ is:
    $$\zeta_{K_n}(u)^{-1}=\left(1-u^2\right)^{n(n-3)/2}\left(1+u + (n-2)u^2\right)^{n-1}\left(1+(1-n)u + (n-2)u^2\right)$$
\end{theorem}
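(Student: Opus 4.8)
The plan is to use the determinant formula of Definition~\ref{def:3D} rather than the linear-subgraph approach, since $K_n$ is vertex-transitive and its adjacency matrix has a very clean spectral structure. Recall that $\zeta_{K_n}(u)^{-1} = (1-u^2)^{r-1}\det(I - \A u + \QQ u^2)$, where for $K_n$ every vertex has degree $n-1$, so $\QQ = (n-2)I$ and $\A = J_n - I$. Hence $I - \A u + \QQ u^2 = \bigl(1 + u + (n-2)u^2\bigr)I - u J_n$, a matrix of the form $aI + bJ_n$ with $a = 1 + u + (n-2)u^2$ and $b = -u$. The rank of $K_n$ is $r = \binom{n}{2} - n + 1 = \frac{n(n-3)}{2} + 1$, so $r - 1 = \frac{n(n-3)}{2}$, which already matches the exponent of $(1-u^2)$ in the claimed formula.

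The key step is then a direct application of Lemma~\ref{lem:MSylv}, which gives $\det(aI_n + bJ_n) = a^{n-1}(a + bn)$. First I would substitute the values of $a$ and $b$ to get
\[
\det(I - \A u + \QQ u^2) = \bigl(1 + u + (n-2)u^2\bigr)^{n-1}\bigl(1 + u + (n-2)u^2 - nu\bigr),
\]
and then simplify the last factor: $1 + u + (n-2)u^2 - nu = 1 + (1-n)u + (n-2)u^2$. Multiplying by $(1-u^2)^{n(n-3)/2}$ yields exactly the stated formula. So the entire proof is: identify $\QQ$ and $\A$ for $K_n$, rewrite the Ihara matrix as $aI + bJ_n$, invoke Lemma~\ref{lem:MSylv}, and collect terms.

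There is essentially no serious obstacle here; the computation is short and the only thing to be careful about is the bookkeeping of the rank exponent, namely checking that $|E| - |V| + 1 - 1 = \binom{n}{2} - n = \frac{n(n-3)}{2}$, and the routine algebraic simplification of the scalar factor. One might also remark that the roots of the quadratic factors recover the known eigenvalues $n-1$ (once) and $-1$ (with multiplicity $n-1$) of $\A$, which is a useful sanity check but not needed for the proof. If desired, the same argument with $\QQ = (n-2+t)I$ and degree $n-1+t$ (equivalently $\A$ unchanged but $D$ increased by $tI$) handles the promised generalization to $K_n$ with $t$ loops added at each vertex, since the Ihara matrix is still of the form $aI + bJ_n$ with $b = -u$ unchanged and $a$ adjusted accordingly.
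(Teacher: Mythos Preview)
Your proof is correct and follows essentially the same route as the paper: compute $r-1=\tfrac{n(n-3)}{2}$, rewrite $I-\A u+\QQ u^2$ as $(1+u+(n-2)u^2)I_n-uJ_n$, and apply Lemma~\ref{lem:MSylv}. One small slip in your closing parenthetical: adding $k$ loops at each vertex does change $\A$ (its diagonal becomes $2k$ by the paper's convention) and raises the degree by $2k$, not $k$, though the matrix remains of the form $aI_n-uJ_n$ so your conclusion still holds.
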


\begin{remark} If we instead consider $K_n$ with the addition of $k$ loops on every vertex, the formula becomes
$$\left(1-u^2\right)^{n(n-3)/2+nk}\left(1+(1-2k)u - (2k+n-2)u^2\right)^{n-1}\left(1-(n+2k-1)u + (2k+n-2)u^2\right),$$ 
where we've used the convention that $\A_{i,i} = 2k$ (see \cite[Definition 2.1]{T}) for the diagonal entries of the adjacency matrix.
All of the formulas in this section have similar generalizations, but hereafter we will not mention them explicitly.
\end{remark}

\begin{proof}
First, a complete graph has $n$ vertices and $\frac{n(n-1)}{2}$ edges, so we calculate 
\begin{align*}
    r-1 &= |E|-|V| \\
    &= \frac{n(n-1)}{2}-n \\
    &= \frac{n^2-n-2n}{2}\\ 
    &= \frac{n(n-3)}{2}.
\end{align*}
Next, the adjacency matrix will be all ones save for the diagonal (as there are no loops or multiedges). Thus $\A=J_n-I_n$. Next, each vertex has degree $n-1$, so $\QQ=(n-2)I_n$. Now, we use the determinant formula for the Ihara zeta function:
\begin{align*}
I - \A u + \QQ u^2 &= I_n - u(J_n-I_n) + (n-2)u^2I_n \\
&= (1+u+ (n-2)u^2)I_n - uJ_n
\end{align*}
We then use Lemma~\ref{lem:MSylv} to simplify:
\begin{align*}
\det(I - \A u + \QQ u^2) &= \det((1+u+ (n-2)u^2)I_n - uJ_n) \\
&= \left(1+u + (n-2)u^2\right)^{n-1}\left(1+u + (n-2)u^2-un\right) \\
&= \left(1+u + (n-2)u^2\right)^{n-1}\left(1+(1-n)u + (n-2)u^2\right)
\end{align*}

\begin{align*}
\zeta_{K_n}(u)^{-1}&= \left(1-u^2\right)^{r-1} \det\left(I - \A u + \QQ u^2\right) \\
&= \left(1-u^2\right)^{n(n-3)/2}\left(1+u + (n-2)u^2\right)^{n-1}\left(1+(1-n)u + (n-2)u^2\right)
\end{align*}
\end{proof}

\begin{theorem} Let $m,n \geq 2$.
    The Ihara zeta function of the complete bipartite graph $K_{m,n}$ is:
\begin{align*}
\zeta_{K_{m,n}}(u)^{-1}&=(1-u^2)^{mn-m-n} \\
& \hspace{0.5 in} \cdot\left[((m-1)u^2+1)^n((n-1)u^2+1)^m \right.\\
& \hspace{1.2 in} 
\left. -mnu^{2}((m-1)u^2+1)^{n-1}((n-1)u^2+1)^{m-1}\right]
\end{align*}
\end{theorem}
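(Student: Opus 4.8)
The plan is to compute $\zeta_{K_{m,n}}(u)^{-1}$ directly from the determinant formula of Definition~\ref{def:3D}, exactly as in the proof for $K_n$, but now the adjacency matrix has block structure so we must use the block-determinant lemmas of Section~\ref{sec:Mdet}. First I would record the elementary data: $K_{m,n}$ has $m+n$ vertices and $mn$ edges, so $r-1 = |E|-|V| = mn-m-n$, which yields the prefactor $(1-u^2)^{mn-m-n}$. Next, ordering the vertices so that the $m$-side comes first, the adjacency matrix is $\A = \begin{bmatrix} 0 & J_{m,n} \\ J_{n,m} & 0 \end{bmatrix}$, every vertex on the $m$-side has degree $n$ and every vertex on the $n$-side has degree $m$, so $\QQ = \mathrm{diag}\big((n-1)I_m,\ (m-1)I_n\big)$. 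Assembling, $I - \A u + \QQ u^2$ is the block matrix
\[
M = \begin{bmatrix} (1+(n-1)u^2)I_m & -u J_{m,n} \\ -u J_{n,m} & (1+(m-1)u^2)I_n \end{bmatrix}.
\]

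Then I would apply Lemma~\ref{lem:Mblock} with $A = (1+(n-1)u^2)I_m$ (invertible as a polynomial matrix, treating $u$ as an indeterminate so $A^{-1}$ exists over the field of rational functions), $B = -uJ_{m,n}$, $C = -uJ_{n,m}$, $D = (1+(m-1)u^2)I_n$. This gives
\[
\det M = (1+(n-1)u^2)^m \det\!\Big( (1+(m-1)u^2)I_n - \tfrac{u^2}{1+(n-1)u^2} J_{n,m}J_{m,n} \Big).
\]
Now $J_{n,m}J_{m,n} = m J_n$, so the inner matrix is $aI_n + bJ_n$ with $a = 1+(m-1)u^2$ and $b = -\tfrac{m u^2}{1+(n-1)u^2}$, and Lemma~\ref{lem:MSylv} evaluates its determinant as $a^{n-1}(a+bn)$. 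Substituting back, $\det M = (1+(n-1)u^2)^m (1+(m-1)u^2)^{n-1}\big(1+(m-1)u^2 - \tfrac{mnu^2}{1+(n-1)u^2}\big)$; clearing the denominator absorbs one factor of $(1+(n-1)u^2)$ into the bracket, producing
\[
\det M = (1+(n-1)u^2)^{m-1}(1+(m-1)u^2)^{n-1}\Big[(1+(m-1)u^2)(1+(n-1)u^2) - mnu^2\Big],
\]
and then expanding the bracket and recombining factors gives exactly the claimed expression $((m-1)u^2+1)^n((n-1)u^2+1)^m - mnu^2((m-1)u^2+1)^{n-1}((n-1)u^2+1)^{m-1}$. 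Multiplying by $(1-u^2)^{mn-m-n}$ finishes the computation.

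There is no serious conceptual obstacle here; the only things to watch are purely bookkeeping. The main subtlety is the invertibility hypothesis in Lemma~\ref{lem:Mblock}: $A = (1+(n-1)u^2)I_m$ is not invertible for the finitely many complex values of $u$ with $(n-1)u^2 = -1$, so strictly speaking one should carry out the determinant identity over $\mathbb{R}(u)$ (or $\mathbb{C}(u)$), where $A$ is invertible, and then note that both sides are polynomials in $u$, hence the identity persists for all $u$ by continuity; alternatively one could invoke Lemma~\ref{lem:Mblock2} directly in the symmetric case $m=n$ and handle general $m,n$ by the $\mathbb{R}(u)$ argument. The other place to be careful is the algebraic recombination at the end — correctly tracking how the factor $1+(n-1)u^2$ from the denominator merges with the $(1+(n-1)u^2)^{m}$ out front to leave the symmetric-looking final form — but this is routine polynomial algebra.
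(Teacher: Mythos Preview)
Your proposal is correct and follows essentially the same route as the paper: set up $I-\A u+\QQ u^2$ as a $2\times 2$ block matrix with diagonal blocks $(1+(n-1)u^2)I_m$ and $(1+(m-1)u^2)I_n$, apply Lemma~\ref{lem:Mblock}, use $J_{n,m}J_{m,n}=mJ_n$, and finish with Lemma~\ref{lem:MSylv}. Your remark about working over $\mathbb{R}(u)$ to justify the invertibility of $A$ is a nice point the paper leaves implicit.
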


We see that this polynomial is symmetric under the exchange of $m$ and $n$, that the largest degree is $2mn = 2|E|$ 
(two times the number of edges), and that there are no odd degree terms in accord with Theorem~\ref{thm:even}.

\begin{proof}
Let $K_{m,n}$ be a complete bipartite graph with vertices partitioned into two sets with respective sizes $m$ and $n$.
Such a graph has $|V|=m+n$ and $|E|=mn$, so $r-1 = mn-m-n$. Using the determinant formula, this means its adjacency matrix (which is size $(m+n) \times (m+n)$) can be arranged such that it has four quadrants, the top left of size $m \times m$ and bottom right of size $n \times n$ that both consist solely of zeros, with the remaining two having $mn$ entries that are all ones. When arranged with vertices in the same order, $\QQ$ will have $m$ diagonal entries with values $n-1$ and the remaining $n$ diagonal entries with values $m-1$. We would need to find the determinant of the following matrix, and then multiply it by $(1-u^2)^{r-1}$:

{\tiny\[
\begin{vmatrix} 
    (n-1)u^2+1 & 0 & \dots & 0 & -u & -u & \dots & -u \\
    0 & (n-1)u^2+1 & & \vdots & \vdots & & \ddots & \vdots \\
    \vdots & & \ddots & 0  & -u & -u & \dots & -u \\
    0 & \dots & 0 & (n-1)u^2+1  & -u & -u & \dots & -u \\
    -u & \dots & -u & -u & (m-1)u^2+1 & 0 & \dots & 0 \\
    -u & \dots & -u & -u & 0 & (m-1)u^2+1 & & \vdots \\
    \vdots & \ddots & \vdots & \vdots & \vdots & & \ddots & 0 \\
    -u & \dots & -u & -u & 0 & \dots & 0 & (m-1)u^2+1
\end{vmatrix}
\]}

As in Lemma~\ref{lem:Mblock},
this is a block matrix with $A = ((n-1)u^2+1)I_m$, so $A^{-1} =((n-1)u^2+1)^{-1}I_m$, $B = -uJ_{m,n}$, $C = -uJ_{n,m}$, and $D = ((m-1)u^2+1)I_n$. First we compute
\begin{align*}
    \det(A) &= \det(((n-1)u^2+1)I_m) \\
    &= ((n-1)u^2+1)^m\det(I_m) \\
    &= ((n-1)u^2+1)^m.
\end{align*}
Then we find $D-CA^{-1}B$ and its determinant:
\begin{align*}
    D-CA^{-1}B &= D-(-uJ_{n,m})((n-1)u^2+1)^{-1}I_m)(-uJ_{m,n}) \\
    &= D-\frac{u^2}{(n-1)u^2+1}J_{n,m}I_mJ_{m,n} \\
    &= D-\frac{u^2}{(n-1)u^2+1}J_{n,m}J_{m,n} \\
    &= ((m-1)u^2+1)I_n+\frac{-mu^2}{(n-1)u^2+1}J_n
\end{align*}
We use Lemma~\ref{lem:MSylv} to simplify:
\begin{align*}
    \det(D-CA^{-1}B) &= ((m-1)u^2+1)^{n-1}\left(((m-1)u^2+1)+\frac{-mnu^{2}}{(n-1)u^2+1}\right).
\end{align*}
Finally, we substitute into the overall determinant as in Lemma~\ref{lem:Mblock},
\begin{align*}
    \zeta_{K_{m,n}}(u)^{-1} &=(1-u^2)^{mn-m-n}\det(A)\det(D-CA^{-1}B) \\
    &= (1-u^2)^{mn-m-n}((n-1)u^2+1)^m \\
    &\hspace{1 in} \left[((m-1)u^2+1)^{n-1}\left(((m-1)u^2+1)+\frac{-mnu^{2}}{(n-1)u^2+1}\right)\right] \\
    &=(1-u^2)^{mn-m-n}((n-1)u^2+1)^{m-1}((m-1)u^2+1)^{n-1} \\
    &\hspace{1.5 in} \left[((m-1)u^2+1)((n-1)u^2+1)-mnu^{2}\right] \\
    &=(1-u^2)^{mn-m-n} \\
    &\hspace{0.5 in} \left[((m-1)u^2+1)^n((n-1)u^2+1)^m \right.\\
    &\hspace{1.5 in} \left. -mnu^{2}((m-1)u^2+1)^{n-1}((n-1)u^2+1)^{m-1}\right] .
\end{align*} 
\end{proof}

\begin{definition}
For $n \geq 4$ even, the {cocktail party graph} $O_n$ is the complement of a perfect matching on $n$ vertices.
\end{definition}

The symbol refers to the fact that $O_6$ is the Octahedral graph, see~\cite{M}.
Note that $O_n = K_{2,2, \ldots, 2}$, the 
complete multipartite graph with $n/2$ parts, each consisting of two vertices.

\begin{theorem} Let $n \geq 2$.
    The Ihara zeta function for $O_{2n}$ is:
    \begin{align*}
\zeta_{O_{2n}}(u)^{-1}&= (1-u^2)^{2n^2-4n}\left((2n-3)^2u^4+(4n-6)u^3+(4n-6)u^2+2u+1\right)^{n-1} \\
&\ \ \ \cdot((2n-3)u^2+1)((2n-3)u^2+(2-2n)u+1)
\end{align*}

\end{theorem}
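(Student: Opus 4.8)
The plan is to apply the Ihara--Bass--Hashimoto determinant formula of Definition~\ref{def:3D} directly, exploiting the block structure of the adjacency matrix of $O_{2n}$, exactly as in the proofs for $K_n$ and $K_{m,n}$ above.

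First I would record the combinatorial data. The graph $O_{2n}$ has $|V| = 2n$ vertices, each of degree $2n-2$, hence $|E| = n(2n-2) = 2n^2 - 2n$ edges and rank $r = |E| - |V| + 1 = 2n^2 - 4n + 1$, so that $r - 1 = 2n^2 - 4n$, which is the exponent appearing in the claimed formula. Since every vertex has the same degree, $\QQ = (2n-3)I_{2n}$. Ordering the $2n$ vertices so that the $i$th vertex and the $(n+i)$th vertex form the $i$th pair of the deleted perfect matching, the adjacency matrix is $\A = J_{2n} - I_{2n} - P$, where $P = \begin{pmatrix} 0 & I_n \\ I_n & 0 \end{pmatrix}$ in $n\times n$ block form.

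Next I would form $M := I - \A u + \QQ u^2 = \alpha I_{2n} + uP - uJ_{2n}$ with $\alpha := 1 + u + (2n-3)u^2$, and write it in $2\times 2$ block form:
\[
M = \begin{pmatrix} \alpha I_n - uJ_n & u(I_n - J_n) \\ u(I_n - J_n) & \alpha I_n - uJ_n \end{pmatrix} = \begin{pmatrix} A & B \\ B & A \end{pmatrix},
\]
with $A = \alpha I_n - uJ_n$ and $B = u(I_n - J_n)$. By Lemma~\ref{lem:Mblock2}, $\det M = \det(A - B)\det(A + B)$. Here $A - B = (\alpha - u)I_n$, so $\det(A - B) = (\alpha - u)^n$ by Lemma~\ref{lem:MkA}, while $A + B = (\alpha + u)I_n - 2uJ_n$, so Lemma~\ref{lem:MSylv} gives $\det(A + B) = (\alpha + u)^{n-1}(\alpha + u - 2un)$.

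Finally I would substitute back $\alpha = 1 + u + (2n-3)u^2$ and simplify, using $\alpha - u = (2n-3)u^2 + 1$, $\alpha + u = (2n-3)u^2 + 2u + 1$, and $\alpha + u - 2un = (2n-3)u^2 + (2-2n)u + 1$, to obtain
\[
\det M = \big((2n-3)u^2 + 1\big)^n \big((2n-3)u^2 + 2u + 1\big)^{n-1}\big((2n-3)u^2 + (2-2n)u + 1\big).
\]
The last step is to pull one factor of $(2n-3)u^2+1$ out of the $n$th power and verify the identity $\big((2n-3)u^2+1\big)\big((2n-3)u^2+2u+1\big) = (2n-3)^2u^4 + (4n-6)u^3 + (4n-6)u^2 + 2u + 1$, which rewrites the expression as the stated quartic raised to the $(n-1)$st power times $\big((2n-3)u^2+1\big)\big((2n-3)u^2+(2-2n)u+1\big)$; multiplying by $(1-u^2)^{r-1} = (1-u^2)^{2n^2-4n}$ then gives the claimed formula. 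There is no serious obstacle here; the only points needing care are choosing the vertex ordering that makes $M$ block-symmetric so that Lemma~\ref{lem:Mblock2} applies, and the final algebraic regrouping into the quartic factor. As a sanity check, $n = 2$ gives $O_4 = C_4$ and the formula collapses to $(1-u^4)^2$, matching the known value of $\zeta_{C_4}(u)^{-1}$.
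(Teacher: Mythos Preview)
Your argument is correct and follows essentially the same route as the paper's proof: the same vertex ordering producing the block form $\begin{pmatrix}A&B\\B&A\end{pmatrix}$ with $A=\alpha I_n-uJ_n$ and $B=uI_n-uJ_n$, the same application of Lemma~\ref{lem:Mblock2} together with Lemmas~\ref{lem:MkA} and~\ref{lem:MSylv}, and the same final regrouping (the paper writes the quartic as $a^2-u^2$ with $a=\alpha$, which is exactly your product $(\alpha-u)(\alpha+u)$). The $n=2$ sanity check is a nice addition not present in the paper.
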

\begin{proof}
Since $O_{2n}$ is the complement of a perfect matching, 
the number of edges in $O_{2n}$ will be the amount in $K_{2n}$ minus $n$.
\begin{align*}
r-1 &= |E|-|V|\\
&= \frac{2n(2n-1)}{2}-n-2n\\
&= 2n^2-n-3n \\
&=2n^2-4n
\end{align*}
Note that $|E|=2n^2-2n=2n(n-1)$. The adjacency matrix will be mostly ones, save for the diagonal and $2n$ other entries. We can arrange the vertices such that we use the first vertex from all of the sets, then use the second vertex of all of the sets with the sets in the same order. This will create the following block matrix:
$$\A=
\begin{bmatrix}
J_n-I_n & J_n-I_n \\
J_n-I_n & J_n-I_n
\end{bmatrix}
$$
As vertices are not adjacent to themselves (diagonals of the top-left and bottom-right matrices are zero), every $i,i+n$ and $i+n,i$ entry for $i\in \{1,2...,n\}$ will be zero by construction (the diagonals of the other two matrices are zero), and all other entries are one. Each vertex is connected to all vertices except itself and one other, so their degrees will be $2n-2$. Thus $\QQ=(2n-3)I_{2n}$.

\begin{align*}
I - \A u + \QQ u^2 &= I_{2n}-u\begin{bmatrix}
J_n-I_n & J_n-I_n \\
J_n-I_n & J_n-I_n
\end{bmatrix}+(2n-3)u^2I_{2n} \\
&= \begin{bmatrix}
uI_n-uJ_n & uI_n-uJ_n \\
uI_n-uJ_n & uI_n-uJ_n
\end{bmatrix}+((2n-3)u^2+1)I_{2n} \\
&= \begin{bmatrix}
uI_n-uJ_n & uI_n-uJ_n \\
uI_n-uJ_n & uI_n-uJ_n
\end{bmatrix}+\begin{bmatrix}
((2n-3)u^2+1)I_n & 0 \\
0 & ((2n-3)u^2+1)I_n
\end{bmatrix} \\
&= \begin{bmatrix}
uI_n-uJ_n+((2n-3)u^2+1)I_n & uI_n-uJ_n \\
uI_n-uJ_n & uI_n-uJ_n+((2n-3)u^2+1)I_n
\end{bmatrix} \\
&= \begin{bmatrix}
((2n-3)u^2+u+1)I_n-uJ_n & uI_n-uJ_n \\
uI_n-uJ_n & ((2n-3)u^2+u+1)I_n-uJ_n
\end{bmatrix} \\
\end{align*}
Let $a=((2n-3)u^2+u+1)$. We see this is a block matrix with $A=D=aI_n-uJ_n$, and $B=C=uI_n-uJ_n$. Thus, by Lemma~\ref{lem:Mblock2}, the determinant will equal $\det(A-B)\det(A+B)$. Since $A-B = (a-u)I_n$, by Lemma~\ref{lem:MkA} its determinant is $(a-u)^n$, and by Lemma~\ref{lem:MSylv} the determinant of $A+B=-2uJ_n+(a+u)I_n$ is $(a+u)^{n-1}(a+u-2nu)$.

\begin{align*}
\det(I - \A u + \QQ u^2) &=\det(A-B)\det(A+B) \\
&= (a-u)^n(a+u)^{n-1}(a+(1-2n)u) \\
&= (a^2-u^2)^{n-1}(a-u)(a+(1-2n)u) \\
\end{align*}

As a quick aside,
\begin{align*}
a^2 &= ((2n-3)u^2+u+1)^2 \\ 
&= (2n-3)^2u^4+u^2+1+2(2n-3)u^3+2(2n-3)u^2+2u\\
&= (2n-3)^2u^4+(4n-6)u^3+(4n-5)u^2+2u+1.
\end{align*}

Returning to the main equation,
\begin{align*}
\det(I - \A u + \QQ u^2) &= \left((2n-3)^2u^4+(4n-6)u^3+(4n-5)u^2+2u+1-u^2\right)^{n-1} \\
&\ \ \ \cdot((2n-3)u^2+u+1-u)((2n-3)u^2+u+1+(1-2n)u)\\
&= \left((2n-3)^2u^4+(4n-6)u^3+(4n-6)u^2+2u+1\right)^{n-1} \\
&\ \ \ \cdot((2n-3)u^2+1)((2n-3)u^2+(2-2n)u+1)\\
\end{align*}

\begin{align*}
\zeta_{O_{2n}}(u)^{-1} &= (1-u^2)^{2n^2-4n}\det(I-\A u + \QQ u^2) \\
&= (1-u^2)^{2n^2-4n}\left((2n-3)^2u^4+(4n-6)u^3+(4n-6)u^2+2u+1\right)^{n-1} \\
&\ \ \ \cdot((2n-3)u^2+1)((2n-3)u^2+(2-2n)u+1) 
\end{align*}
\end{proof}

Recall that $B_{2n}$ is $K_{n,n}$ after deleting a perfect matching.

\begin{theorem} Let $n\geq 3$.
The Ihara zeta function for $B_{2n}$ is:
$$
\zeta_{B_{2n}}(u)^{-1} 
= (1-u^2)^{n(n-3)}((1+(n-2)u^2)^2-u^2)^{n-1}((1+(n-2)u^2)^2-(1-n)^2u^2)
$$
\end{theorem}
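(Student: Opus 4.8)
The plan is to mimic the structure of the preceding three proofs (for $K_n$, $K_{m,n}$, and $O_{2n}$), working directly from the determinant formula of Definition~\ref{def:3D}. First I would record the combinatorial data of $B_{2n}$: it has $|V| = 2n$ vertices, each of degree $n-1$ (since each vertex of $K_{n,n}$ has degree $n$ and we remove one matching edge), and $|E| = n^2 - n = n(n-1)$ edges, so that $r - 1 = |E| - |V| = n(n-1) - 2n = n(n-3)$, which explains the exponent on $(1-u^2)$ in the statement. Since every vertex has degree $n-1$, we get $\QQ = (n-2)I_{2n}$.

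Next I would write down the adjacency matrix in block form. Ordering the two parts so that the removed matching pairs up vertex $i$ in the first part with vertex $i$ in the second, the adjacency matrix is
\[
\A = \begin{bmatrix} 0 & J_n - I_n \\ J_n - I_n & 0 \end{bmatrix},
\]
and hence
\[
I - \A u + \QQ u^2 = \begin{bmatrix} (1 + (n-2)u^2)I_n & -u(J_n - I_n) \\ -u(J_n - I_n) & (1 + (n-2)u^2)I_n \end{bmatrix}.
\]
Writing $a = 1 + (n-2)u^2$, this is a block matrix with $A = D = aI_n$ and $B = C = -u(J_n - I_n)$, so Lemma~\ref{lem:Mblock2} applies and $\det(I - \A u + \QQ u^2) = \det(A - B)\det(A + B)$. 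Here $A - B = (a - u)I_n + uJ_n$ and $A + B = (a + u)I_n - uJ_n$, both of the form $\alpha I_n + \beta J_n$, so Lemma~\ref{lem:MSylv} evaluates each: $\det(A - B) = (a-u)^{n-1}(a - u + un) = (a-u)^{n-1}(a + (n-1)u)$ and $\det(A + B) = (a+u)^{n-1}(a + u - un) = (a+u)^{n-1}(a - (n-1)u)$.

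Multiplying these gives $\det(I - \A u + \QQ u^2) = \big((a-u)(a+u)\big)^{n-1}\,(a+(n-1)u)(a-(n-1)u) = (a^2 - u^2)^{n-1}(a^2 - (n-1)^2 u^2)$, and substituting $a = 1 + (n-2)u^2$ yields exactly $((1+(n-2)u^2)^2 - u^2)^{n-1}((1+(n-2)u^2)^2 - (1-n)^2 u^2)$. Multiplying by $(1-u^2)^{n(n-3)}$ as in Definition~\ref{def:3D} produces the claimed formula. The only mildly delicate points are getting the off-diagonal block right (that the adjacency blocks are $J_n - I_n$, not $J_n$, because of the deleted matching) and the correct pairing of $A \pm B$ with the two factors; the rest is a direct application of Lemmas~\ref{lem:MkA}, \ref{lem:MSylv}, and \ref{lem:Mblock2}, so I do not anticipate a genuine obstacle — this is essentially the $O_{2n}$ computation with the two diagonal blocks replaced by zero.
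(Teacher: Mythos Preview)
Your proposal is correct and follows essentially the same approach as the paper: compute $r-1=n(n-3)$, write $I-\A u+\QQ u^2$ as a $2\times 2$ block matrix with $A=D=(1+(n-2)u^2)I_n$ and $B=C=-u(J_n-I_n)$, apply Lemma~\ref{lem:Mblock2}, and evaluate each factor via Lemma~\ref{lem:MSylv}. The only cosmetic difference is your use of the shorthand $a=1+(n-2)u^2$; the paper writes the same computation out in full.
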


\begin{remark} Note that, in accord with Theorem~\ref{thm:even},
the zeta polynomial for the bipartite graph $B_{2n}$ is even.
\end{remark}

\begin{proof}
Let $v_1, v_2, \ldots, v_{n}$ and $w_1, w_2, \ldots, w_{n}$ be the two parts of $V$ and assume that 
$v_iw_i \not\in E$. First, $r -1= |E| - |V| = (n-1)n - 2n = (n-3)n$.
Since $B_{2n}$ is regular of degree $n-1$,
$I-\A u +\QQ u^2$ is a block matrix (see Lemma~\ref{lem:Mblock}) with $A = D = (1+(n-2)u^2) I_n$ and $B = C= -uJ_n + uI_n$. Since we have this symmetry, we can use the special case Lemma~\ref{lem:Mblock2} along with Lemma~\ref{lem:MSylv} to simplify, handling both parts simultaneously:
\begin{align*}
\det(A\pm B) &= \det((1+(n-2)u^2) I_n \pm(-uJ_n + uI_n)) \\
&= \det((1\pm u+(n-2)u^2)I_n \mp uJ_n) \\
&= (1\pm u+(n-2)u^2)^{n-1}((1\pm u+(n-2)u^2)\mp un) \\
&= (1\pm u+(n-2)u^2)^{n-1}((1\pm(1-n)u+(n-2)u^2))
\end{align*}

\begin{align*}
\det(I-\A u +\QQ u^2) &= \det(A-B)\det(A+B) \\
&= (1+ u+(n-2)u^2)^{n-1}((1+(1-n)u+(n-2)u^2)) \\
&\ \ \ \cdot(1- u+(n-2)u^2)^{n-1}((1-(1-n)u+(n-2)u^2)) \\
&= ((1+(n-2)u^2)^2-u^2)^{n-1}((1+(n-2)u^2)^2-(1-n)^2u^2) \\
\end{align*}
\begin{align*}
\zeta_{B_{2n}}(u)^{-1} &= (1-u^2)^{n(n-3)} \det(I-\A u +\QQ u^2)\\
 &= (1-u^2)^{n(n-3)}((1+(n-2)u^2)^2-u^2)^{n-1}((1+(n-2)u^2)^2-(1-n)^2u^2) \\
\end{align*}

\end{proof}

For $n$ even, the M\"obius ladder~\cite{GH} graph $M_n$ consists of an $n$-cycle together with $n/2$ diagonal edges that connect 
antipodal vertices on the cycle. It is regular of degree 3.

\begin{theorem} 
Let $n\geq 4$ be even and $\omega = e^{2 \pi i/n}$.
The Ihara zeta function for the M\"{o}bius ladder graph $M_n$ is:
$$
\zeta_{M_{n}}(u)^{-1} = 
(1-u^2)^{n/2} u^n  \prod_{k=0}^{n-1} \left(-\frac{1+2u^2}{u} +  \omega^k + \omega^{kn/2} + \omega^{k(n-1)}\right)
$$
\end{theorem}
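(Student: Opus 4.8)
The plan is to compute $\zeta_{M_n}(u)^{-1}$ directly from Definition~\ref{def:3D} by diagonalizing the matrix $I - \A u + \QQ u^2$. The key structural fact is that $M_n$ is a circulant-like graph: its adjacency matrix is a circulant matrix on the cyclic group $\mathbb{Z}/n\mathbb{Z}$. Indeed, if we label the vertices $0, 1, \dots, n-1$ around the $n$-cycle, then vertex $j$ is adjacent to $j\pm 1 \pmod n$ (the cycle edges) and to $j + n/2 \pmod n$ (the diagonal edge). So $\A = P + P^{-1} + P^{n/2}$ where $P$ is the cyclic shift permutation matrix. Since $M_n$ is $3$-regular, $\QQ = 2 I_n$, and hence $r - 1 = |E| - |V| = \tfrac{3n}{2} - n = \tfrac n2$, which matches the exponent $n/2$ on $(1-u^2)$ in the statement.

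First I would recall that all powers of $P$ are simultaneously diagonalized by the Fourier matrix: the eigenvalues of $P$ are $\omega^k$ for $k = 0, 1, \dots, n-1$ with $\omega = e^{2\pi i/n}$, and the corresponding eigenvector of $P^\ell$ has eigenvalue $\omega^{k\ell}$. Therefore the matrix
\[
I - \A u + \QQ u^2 = (1 + 2u^2) I_n - u\bigl(P + P^{-1} + P^{n/2}\bigr)
\]
has, in the same eigenbasis, eigenvalues
\[
(1 + 2u^2) - u\bigl(\omega^k + \omega^{-k} + \omega^{kn/2}\bigr) = -u\left(-\frac{1+2u^2}{u} + \omega^k + \omega^{k(n-1)} + \omega^{kn/2}\right),
\]
using $\omega^{-k} = \omega^{k(n-1)}$. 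Taking the product over $k = 0, \dots, n-1$ gives $\det(I - \A u + \QQ u^2)$, and the scalar factors $-u$ contribute $(-u)^n = u^n$ since $n$ is even. Multiplying by $(1-u^2)^{r-1} = (1-u^2)^{n/2}$ yields exactly the claimed formula.

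The main obstacle, such as it is, is bookkeeping rather than conceptual: one must justify carefully that $P + P^{-1} + P^{n/2}$ is genuinely simultaneously diagonalizable over $\mathbb{C}$ (immediate, since all are powers of the single normal matrix $P$, or equivalently all are polynomials in $P$), and one must be careful about the factor $(-u)^n$ — writing the eigenvalue as $-u$ times the bracketed expression is only legitimate for $u \neq 0$, so strictly the identity of polynomials follows by continuity (both sides are polynomials in $u$ agreeing on $\mathbb{C}\setminus\{0\}$). I would also double-check the degree: the right-hand side has degree $(n/2)\cdot 2 + n + n = 3n = 2|E|$, in agreement with Theorem~\ref{thm:Main}. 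No appeal to the oriented line graph is needed here; the circulant structure makes the $3\times 3$-block determinant lemmas of Section~\ref{sec:Mdet} unnecessary, though one could alternatively phrase the computation via those lemmas after a suitable permutation of vertices.
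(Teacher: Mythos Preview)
Your proposal is correct and follows essentially the same route as the paper: both recognize that $I - \A u + \QQ u^2$ is (a scalar multiple of) a circulant matrix and compute its determinant via the standard eigenvalue formula for circulants. The only cosmetic difference is that the paper cites Lehmer's circulant determinant formula directly, whereas you spell out the diagonalization by powers of the shift matrix $P$; your extra care with the $u=0$ case and the degree check is a nice touch but not a departure in method.
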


\begin{remark} 
The proof below can be generalized to other circulant graphs.
\end{remark}

\begin{proof}
If we label the vertices of the cycle $v_1, v_2, \ldots, v_n$ in order, the adjacency matrix has a nice 
structure. For example, when $n = 10$, we have

$$\A = \left(\begin{array}{llllllllll}
0 & 1 & 0 & 0  & 0 & 1 & 0 & 0 & 0 & 1 \\
1 & 0 & 1 & 0 & 0  & 0 & 1 & 0 & 0 & 0 \\
0 & 1 & 0 & 1 & 0 & 0  & 0 & 1 & 0 & 0 \\
0 & 0 & 1 & 0 & 1 & 0 & 0  & 0 & 1 & 0 \\
0 & 0 & 0 & 1 & 0 & 1 & 0 & 0  & 0 & 1 \\
1 & 0 & 0 & 0 & 1 & 0 & 1 & 0 & 0  & 0 \\
0 & 1 & 0 & 0 & 0 & 1 & 0 & 1 & 0 & 0  \\
0 & 0 & 1 & 0 & 0 & 0 & 1 & 0 & 1 & 0  \\
0 & 0 & 0 & 1 & 0 & 0  & 0 & 1 & 0  & 1 \\
1 & 0 & 0 & 0 & 1 & 0 & 0  & 0 & 1 & 0  
\end{array}\right)$$

\smallskip

There are diagonals of 1's above and below the main diagonal as well as diagonals of 1's in the top right block and lower left block 
of the matrix. The matrix $-\frac{1}{u} (I - \A u + \QQ u^2)$ is similar with $a = -(1+2u^2)/u$ replacing the 0's on the diagonal. This is a circulant
matrix and, as explained by Lehmer~\cite{L}, its determinant can be written
$$ \det \left(-\frac{1}{u} (I - \A u + \QQ u^2) \right)  = \prod_{k=0}^{n-1} (a +  \omega^k + \omega^{kn/2} + \omega^{k(n-1)}).$$

Applying Lemma~\ref{lem:MkA} to Definition~\ref{def:3D}, we obtain the required equation.
\end{proof}

\section{Graphs of small order}
\label{sec:ord}

In this section we determine the Ihara zeta function for graphs of small order.
Let $BQ_a$ denote the multigraph on one vertex with $a$ loops, sometimes called a
bouquet of $a$ loops. Then $\A = 2a$,
$\QQ = 2a-1$, $r-1 = |E| - |V| = a-1$, and 
$$\zeta_{BQ_{a}}(u)^{-1} = (1-u^2)^{a-1} (1 -2au + (2a-1)u^2)$$ 
In particular, if $a = 0$, $\zeta_{BQ_{0}}(u)^{-1} = \zeta_{K_{1}}(u)^{-1} = 1$.

Let $D_{a,b,c}$ denote a two vertex multigraph, with $a$ loops on $v_1$, $b$ loops 
on $v_2$ and $c$ $v_1v_2$ edges. Then, 
$$I - \A u + \QQ u^2 = \left(\begin{array}{cc}
1 - 2au + (2a+c-1)u^2 & -cu \\
-cu & 1 - 2bu + (2b+c-1)u^2
\end{array}\right)$$
has determinant $(1-2au+(2a+c-1)u^2)(1-2bu+(2b+c-1)u^2) - (cu)^2$.
Since $r-1 = |E|-|V| = a+b+c -2$,
$$\zeta_{D_{a,b,c}}(u)^{-1} = (1-u^2)^{a+b+c-2} \left((1-2au+(2a+c-1)u^2)(1-2bu+(2b+c-1)u^2) - (cu)^2 \right).$$
Note that the graph $BL$ mentioned in Section~\ref{sec:rk2}, consisting of a bigon and a loop, is $D_{0,1,2}$.

Let $T_{\vec{a}}$, where $\vec{a} = (a_1,a_2,a_3,b_{12},b_{13},b_{23})$, denote the multigraph on three vertices with
$a_i$ loops on $v_i$ ($i = 1,2,3$), and $b_{ij}$ $v_iv_j$ edges. Here,   $I - \A u + \QQ u^2$ is
{\tiny$$ \left(\begin{array}{ccc}
1 - 2a_1u + (2a_1+b_{12}+b_{13}-1)u^2 & -b_{12}u & -b_{13}u \\
-b_{12}u & 1 - 2a_2u + (2a_3+b_{12}+b_{23}-1)u^2 & -b_{23}u \\
-b_{13}u & -b_{23}u & 1 - 2a_3u + (2a_3+b_{13}+b_{23}-1)u^2
\end{array}\right)$$}
and $r-1 = |E| - |V| = a_1+a_2+a_3+b_{12}+b_{13}+b_{23}-3$.
Thus, one can calculate $\zeta_{T_{\vec{a}}}(u)^{-1} = (1-u^2)^{r-1} \det(I - \A u + \QQ u^2)$.
Note that the graph $BB$ mentioned in Section~\ref{sec:rk2}, consisting of two bigons that share a vertex, is the 
graph $T_{\vec{a}}$,  where $\vec{a} = (0,0,0,0,2,2)$. In this case,
$$I - \A u + \QQ u^2 = 
\left(\begin{array}{ccc}
1 + u^2 & 0 & -2u \\
 0 & 1 + u^2 & -2u \\
-2u & -2u & 1 + 3u^2
\end{array}\right)$$
with determinant $3u^6 - u^4 -3u^2 + 1$. The rank is $r = 2$ so that 
$\zeta_{BB}(u)^{-1} = (1-u^2)(3u^6 - u^4 -3u^2 + 1)$.

Next, we turn to simple graphs on four vertices. Given the requirements that $G$ be connected with no vertex
of degree 1, there are only three graphs. We gave the zeta function of $K_4$ and $C_4$ in the previous section.
For $K_4^-$, that is, $K_4$ with one edge removed, we have
$$\zeta_{K_4^-} (u)^{-1} = 
-4u^{10} + u^8 + 4u^7 + 4u^6 - 2u^4 - 4u^3 + 1.$$

For simple graphs on five vertices, we determined zeta functions for $K_5$, $K_{2,3}$, and $C_5$ in the previous section.
There is only one graph of order five and size nine, $K_5^-$, for which
\begin{align*}
\zeta_{K_5^-} (u)^{-1}  &= 
108u^{18} - 360u^{16} - 80u^{15} + 345u^{14} + 252u^{13} + 52u^{12} - 222u^{11} - 234u^{10}\\
 & \ \ \ \ - 32u^9 + 69u^8  + 108u^7 + 37u^6 - 12u^5 - 18u^4 - 14u^3 + 1.
 \end{align*}

There are two graphs of size eight, which we can describe in terms of the complement. If the complement
of $G$ is a matching on four vertices, then
$$\zeta_{G} (u)^{-1} = 
-48u^{16} + 112u^{14} + 32u^{13} - 40u^{12} - 64u^{11} - 68u^{10} + 8u^9 + 41u^8 + 40u^7 + 12u^6 - 8u^5 - 10u^4 - 8u^3 + 1.
$$
On the other hand, if the complement of $G$ is a path on three vertices, 
$$\zeta_{G} (u)^{-1} = 
-36u^{16} + 73u^{14} + 28u^{13} - 4u^{12} - 50u^{11} - 62u^{10} - 8u^9 + 17u^8 + 44u^7 + 21u^6 - 4u^5 - 10u^4 - 10u^3 + 1.
$$

There are three graphs of size seven. If the complement is a triangle,
$$\zeta_{G} (u)^{-1} = 
9u^{14} - 4u^{12} - 6u^{11} - 18u^{10} + 9u^8 + 12u^7 + 9u^6 - 6u^4 - 6u^3 + 1.
$$
When the complement is a path on four vertices,
$$\zeta_{G} (u)^{-1} = 
12u^{14} - 11u^{12} - 10u^{11} - 11u^{10} + 6u^9 + 6u^8 + 12u^7 + 7u^6 - 2u^5 - 4u^4 - 6u^3 + 1.$$
Finally, when the complement is a path on three vertices with a disjoint $K_2$,
$$\zeta_{G} (u)^{-1} = 
16u^{14} - 20u^{12} - 8u^{11} - 12u^{10} + 4u^9 + 17u^8 + 12u^7 + 4u^6 - 4u^5 - 6u^4 - 4u^3 + 1.$$

There is only one graph of size at most five, $C_5$. It remains to
enumerate those of size six. One is $K_{2,3}$ and there are two others.
If the complement of $G$ is a path on four vertices, then
$$\zeta_{G} (u)^{-1} = 
-4u^{12} + u^{10} + 2u^9 + 3u^8 + 2u^7 + u^6 - 2u^5 - 2u^4 - 2u^3 + 1.$$
Finally, if the complement of $G$ is a $4$-cycle, then
$$\zeta_{G} (u)^{-1} = 
-3u^{12} + 4u^9 + 2u^6 - 4u^3 + 1.$$

\section{Spanning trees}
\label{sec:kappa}

Let $\kappa_G$ denote the number of spanning trees of graph $G$.
In this section, we use the Ihara zeta function to verify $\kappa_G$ for
various graphs discussed in this paper. 

The following theorem is given as an exercise in \cite{T}.
 
\begin{theorem}
\label{thm:ASTF}
The Ihara zeta function satisfies
$$ \frac{d^r}{du^r} \zeta_G(u)^{-1}\bigg\rvert_{u = 1} = (-1)^{r-1} 2^r r! (r-1) \kappa_G,$$
where $r = |E| - |V| + 1$ is the graph's rank.
\end{theorem}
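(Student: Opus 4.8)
The plan is to start from the determinant formula of Definition~\ref{def:3D}, writing $\zeta_G(u)^{-1} = (1-u^2)^{r-1} f(u)$ with $f(u) = \det(I - \A u + \QQ u^2)$, and to extract the $r$-th derivative at $u=1$ by pairing the known vanishing of $(1-u^2)^{r-1}$ at $u=1$ with a single derivative of $f$. The key first observation is that
\[
f(1) = \det(I - \A + \QQ) = \det(D - \A) = \det(L),
\]
the determinant of the graph Laplacian $L$, which is $0$ because $G$ is connected, so $\vec 1 \in \ker L$. (For multigraphs one checks that $I-\A+\QQ$ is exactly the Laplacian of $G$ with its loops deleted, which is still connected and does not change $\kappa_G$.) Since $1-u^2 = -(u-1)(u+1)$ gives $(1-u^2)^{r-1} = (-1)^{r-1}2^{r-1}(u-1)^{r-1} + O\big((u-1)^r\big)$, applying Leibniz's rule to $\dfrac{d^r}{du^r}\big[(1-u^2)^{r-1} f(u)\big]$ at $u=1$ annihilates every summand except the one pairing the $(r-1)$-st derivative of $(1-u^2)^{r-1}$ with $f'(1)$ (the $k<r-1$ terms vanish because $(1-u^2)^{r-1}$ has a zero of order $r-1$, and the $k=r$ term vanishes because $f(1)=0$). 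This yields
\[
\frac{d^r}{du^r}\zeta_G(u)^{-1}\Big|_{u=1} = r!\,(-1)^{r-1}\,2^{r-1}\, f'(1).
\]

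The next step is to compute $f'(1)$. Writing $M(u) = I - \A u + \QQ u^2$, Jacobi's formula $f'(u) = \operatorname{tr}\big(\operatorname{adj}(M(u))\,M'(u)\big)$ (a polynomial identity, so valid even though $M(1)=L$ is singular) gives $f'(1) = \operatorname{tr}\big(\operatorname{adj}(L)\,M'(1)\big)$ with $M'(1) = -\A + 2\QQ$. From the adjugate identity $L\,\operatorname{adj}(L) = \operatorname{adj}(L)\,L = \det(L) I = 0$ and the fact that, for a connected graph, $\ker L$ (and the left kernel, by symmetry) is spanned by $\vec 1$, every row and every column of $\operatorname{adj}(L)$ is constant, so $\operatorname{adj}(L) = cJ$ for a single scalar $c$; the Matrix--Tree theorem identifies $c$ with a principal cofactor of $L$, namely $c = \kappa_G$, giving $\operatorname{adj}(L) = \kappa_G J$. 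Hence $f'(1) = \kappa_G\,\operatorname{tr}(J\,M'(1)) = \kappa_G\cdot(\text{sum of all entries of } M'(1))$.

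Finally, a short bookkeeping computation evaluates that entry-sum: the row sums of $\A$ equal the vertex degrees, so the entries of $\A$ sum to $2|E|$, while $\QQ = D - I$ has entries summing to $2|E|-|V|$; thus the entries of $M'(1) = -\A + 2\QQ$ sum to $-2|E| + 2(2|E|-|V|) = 2(|E|-|V|) = 2(r-1)$. Therefore $f'(1) = 2(r-1)\kappa_G$, and substituting back gives $\dfrac{d^r}{du^r}\zeta_G(u)^{-1}\big|_{u=1} = (-1)^{r-1}2^r r!\,(r-1)\,\kappa_G$. The main obstacle is the middle step: one must be careful to justify Jacobi's formula at the singular point $u=1$ and, more importantly, to establish the full identity $\operatorname{adj}(L) = \kappa_G J$ (not merely that its diagonal entries are $\kappa_G$); everything else is routine differentiation and counting, and the multigraph case requires only the remark that loops affect neither $\kappa_G$, the Laplacian $M(1)$, nor the entry-sum of $M'(1)$.
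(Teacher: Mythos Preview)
The paper does not actually prove Theorem~\ref{thm:ASTF}; it is stated with the remark ``The following theorem is given as an exercise in~\cite{T}'' and then simply applied. So there is no proof in the paper to compare against, and your proposal supplies exactly what the paper omits.

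Your argument is correct. The Leibniz step is clean: $(1-u^2)^{r-1}$ vanishes to order $r-1$ at $u=1$, $f(1)=\det L=0$, and the single surviving term is $\binom{r}{r-1}g^{(r-1)}(1)f'(1)=r\cdot(-1)^{r-1}(r-1)!\,2^{r-1}f'(1)$. The evaluation of $f'(1)$ via Jacobi's formula and $\operatorname{adj}(L)=\kappa_G J$ is standard and correctly justified (the adjugate identity is polynomial, hence valid at the singular point; connectivity forces the rank-one structure; Matrix--Tree pins the constant). The entry-sum bookkeeping $-2|E|+2(2|E|-|V|)=2(r-1)$ is right, including for multigraphs, since loops contribute $2$ to the diagonal of $\A$ and $2$ to the degree.

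One small wording issue: your final parenthetical says loops ``affect neither \ldots\ the entry-sum of $M'(1)$.'' That is not literally true---adding a loop increases the entry-sum by $2$---but it also increases $r$ by $1$, so the identity $\text{(entry-sum)}=2(r-1)$ is preserved. Your main computation already handles this correctly; just rephrase the remark to say the \emph{formula} is unaffected rather than the quantity itself.
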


We first check $\kappa_G$ for the graphs of rank two.
If $r = 2$, the Theorem states 

$$ \kappa_G = - \frac{1}{8} \frac{d^2}{du^2} \zeta_G(u)^{-1}\bigg\rvert_{u = 1}$$

It's easy to see that the $\kappa_{G_{m,n}} = mn$ since a 
spanning tree is formed by removing one edge of the $m$-cycle
and one edge of the $n$ cycle. This agrees with the result of Theorem~\ref{thm:ASTF}
using the formula for $\zeta_{G_{m,n}}(u)^{-1}$ given by Theorem~\ref{thm:Gmn}.

Similarly, $\kappa_{H_{m,n,l}} = mn$, which agrees with the result of Theorem~\ref{thm:ASTF}
using the formula for $\zeta_{H_{m,n,l}}(u)^{-1}$ of Theorem~\ref{thm:Hmnl}.

The graph $G_{m,n,p}$ is a theta graph: a union of three internally disjoint paths
of length $m-p$, $n-p$, $p$ between the distinct vertices $v$ and $w$. A spanning tree is
formed by removing one edge from two of the three paths. Thus,
\begin{align*}
    \kappa_{G_{m,n,p}} & = p(m-p) + p(n-p) + (m-p)(n-p) \\
    & = mn - p^2.
\end{align*}
This agrees with the result of applying Theorem~\ref{thm:ASTF} to the equation
for $\zeta_{G_{m,n,p}}(u)^{-1}$ in Theorem~\ref{thm:Gmnp}.

Next we verify known formulas for the number of spanning trees of four of the graph
families in Section~\ref{sec:var}. In each case, we found
$\zeta_{G}(u)^{-1} = (u-1)^{r-1}f(u)$ for some polynomial $f(u)$.
It follows that 
$$ \frac{d^r}{du^r} \zeta_G(u)^{-1}\bigg\rvert_{u = 1} = r! f'(u) \bigg\rvert_{u = 1}.$$
Comparing with Theorem~\ref{thm:ASTF}, 
\begin{equation}
\label{eqn:KG}
\kappa_G = \frac{(-1)^{r-1}}{2^r(r-1)} f'(u) \bigg\rvert_{u = 1}.
\end{equation}

For $K_n$, $f(u)$ is a product of three factors, the third of which, $\left(1+(1-n)u + (n-2)u^2\right)$, evaluates to zero at $u = 1$.
Thus,
\begin{align*}
f'(u) \bigg\rvert_{u = 1} &= \left(-1-u\right)^{n(n-3)/2}\left(1+u + (n-2)u^2\right)^{n-1}\left(1+(1-n)u + (n-2)u^2\right)' \bigg\rvert_{u = 1} \\
 &= (-2)^{n(n-3)/2}n^{n-1} \left( (1-n) + 2(n-2) \right) \\
 &= (-2)^{n(n-3)/2}n^{n-1} (n-3) 
\end{align*}

Substituting into Equation~\ref{eqn:KG}, and using $r-1 = n(n-3)/2$, we have
\begin{align*}
\kappa_{K_n} &= \frac{(-1)^{r-1}}{2^r(r-1)} f'(u) \bigg\rvert_{u = 1}\\
 &= \frac{2^{r-1}n^{n-1} (n-3)}{2^r n (n-3)/2} \\
 & = n^{n-2},
\end{align*}
which is Cayley's formula~\cite{C}.

For $K_{m,n}$, 
\begin{align*}
f(u) &=
(-1-u)^{r-1} \left[((m-1)u^2+1)^n((n-1)u^2+1)^m \right.\\
& \hspace{1.2 in} 
\left. -mnu^{2}((m-1)u^2+1)^{n-1}((n-1)u^2+1)^{m-1}\right]
\end{align*}

Since the second factor evaluates to zero at $u = 1$, 
\begin{align*}
f'(u) \bigg\rvert_{u = 1} &= (-2)^{r-1}\left[nm^{n-1}2(m-1)n^m + m^n mn^{m-1}2(n-1)\right.\\
& \hspace{0.2 in} 
\left. -mn \left(2 m^{n-1}n^{m-1} + (n-1)m^{n-2} 2(m-1) n^{m-1} + m^{n-1} (m-1)n^{m-2} 2(n-1)\right)\right]\\
&= (-2)^{r-1}\left[2(m-1) \left( m^{n-1}n^{m+1} - (n-1) m^{n-1}n^{m} \right) \right.\\
& \hspace{1.2 in} \left.  + 2(n-1) \left( m^{n+1}n^{m-1} - (m-1)m^nn^{m-1} \right) - 2m^nn^m \right]\\
&= (-2)^{r-1}\left[2(m-1) m^{n-1}n^m + 2(n-1) m^nn^{m-1} - 2m^nn^m \right] \\
&= (-2)^{r-1}\left[2m^nn^m - 2m^{n-1}n^{m-1}(m+n)\right] \\
&= (-2)^{r-1}m^{n-1}n^{m-1} 2(mn - m-n)\\
\end{align*}

Substituting into Equation~\ref{eqn:KG}, and using $r-1 = mn-m-n$, we have
\begin{align*}
\kappa_{K_{m,n}} &= \frac{(-1)^{r-1}}{2^r(r-1)} f'(u) \bigg\rvert_{u = 1} = m^{n-1}n^{m-1},
\end{align*}
also a well-known formula (see for example~\cite{A}).

The Ihara zeta function for $O_{2n}$ is a product of four factors, the last of which
evaluates to zero at $u = 1$. Thus,
%\begin{align*}
%f'(u) \bigg\rvert_{u = 1} &= (-2)^{r-1} \left((2n-3)^2+(4n-6)+(4n-6)+3\right)^{n-1} \\
% &\ \ \ \cdot\left(4(2n-3)^2+3(-4n^2+10n-6)+2(4n-6)+(-2n+2)\right)  \\
% &= (-2)^{r-1} (4n^2-4n)^{n-1}  (4n^2-12n+8)
%\end{align*}
\begin{align*}
f'(u) \bigg\rvert_{u = 1} &= (-2)^{r-1} \left((2n-3)^2+(4n-6)+(4n-6)+3\right)^{n-1} \\
 &\ \ \ \cdot (2n-2) \left(2(2n-3)+(-2n+2)\right)  \\
 &= (-2)^{r-1} (4n^2-4n)^{n-1}  (2n-2)(2n-4)
\end{align*}

Substituting into Equation~\ref{eqn:KG}, and using $r-1 = 2n^2-4n$, we have
%\begin{align*}
%\kappa_{O_{2n}} &= \frac{(-1)^{r-1}}{2^r(r-1)} f'(u) \bigg\rvert_{u = 1} \\
% &= \frac{(4n^2-4n)^{n-1}  (4n^2-12n+8)}{2(2n^2-4n)}\\
% &= \frac{\left(4n(n-1)\right)^{n-1}  4(n-2)(n-1)}{4n(n-2)}\\
% &= 4^{n-1}n^{n-2}(n-1)^n
% \end{align*}
\begin{align*}
\kappa_{O_{2n}} &= \frac{(-1)^{r-1}}{2^r(r-1)} f'(u) \bigg\rvert_{u = 1} \\
 &= \frac{(4n^2-4n)^{n-1}  (2n-2)(2n-4)}{2(2n^2-4n)}\\
 &= \frac{\left(4n(n-1)\right)^{n-1}  4(n-2)(n-1)}{4n(n-2)}\\
 &= 4^{n-1}n^{n-2}(n-1)^n
 \end{align*}
 in agreement with \cite[Corollary 3.5]{M}.
 
 Finally, for $B_{2n}$ we have a similar argument where the last factor of $f(u)$, 
 %$((n-1)^2 - (1/u + (n-2)u)^2)$, 
 $((1+(n-2)u^2)^2-(1-n)^2u^2)$,
 evaluates
 to zero at $u = 1$. This means
% \begin{align*}
%f'(u) \bigg\rvert_{u = 1} &= (-2)^{r-1} (-1)^n (1 - (n-1)^2)^{n-1} \left(-2(n-1)(n-3) \right)\\ 
% &= (-2)^{r-1} (n^2-2n)^{n-1}2(n-1)(n-3)
%\end{align*}  
 \begin{align*}
f'(u) \bigg\rvert_{u = 1} &= (-2)^{r-1} ((n-1)^2-1)^{n-1} \left(4(n-1)(n-2) - 2(1-n)^2 \right)\\ 
 &= (-2)^{r-1} (n^2-2n)^{n-1}2(n-1)(n-3)
\end{align*}  

and, since $r-1 = n(n-3)$,
\begin{align*}
\kappa_{B_{2n}} &= \frac{(-1)^{r-1}}{2^r(r-1)} f'(u) \bigg\rvert_{u = 1} \\
 &= (n^2-2n)^{n-1}(n-1)/n \\
 &= n^{n-2}(n-2)^{n-1}(n-1),
\end{align*}
which agrees with \cite[Corollary 3.10]{M}.

\end{document}